\newcommand{\cmark}{\ding{51}}%
\newcommand{\xmark}{\ding{55}}%
\newtheorem{propo}{Proposition}
\newtheorem{coro}{Corolary}
\newtheorem{lema}{Lemma}
\newtheorem{rem}{Remark}
\newcommand{\proof}{{\noindent \bf Proof}:\ }
\title{A strong formulation for Multiple Allocation Hub Location based on supermodular inequalities}
\author[uca]{Elena Fern\'{a}ndez}
\author[uca]{Nicol\'{a}s Zerega}
\address[uca]{Statistics and Operations Research Department, Universidad de Cádiz, Puerto Real, Spain}
\date{}
\begin{document}
	\begin{abstract}
		We introduce a new formulation for the multiple allocation hub location problem that exploits supermodular properties and uses 1- and 2-index variables only. We show that the new formulation produces the same Linear Programming bound as the tightest existing formulations for the studied problem, which use 4-index variables, outperforming existing supermodular formulations adapted to the considered problem. Computational results are presented with instances of up to 200 nodes optimally solved within a time limit of two hours.
	\end{abstract}
	
	\maketitle
	
	\section{Introduction}\label{sec:intro}
		Hub location problems (HLPs) are challenging combinatorial optimization problems that integrate strategic facility location and network design decisions, with operational routing decisions. These problems typically arise in systems where flows are consolidated and distributed through intermediate transfer facilities known as hubs. Prominent examples include the airline industry, where major airports function as hubs connected to smaller airports via feeder flights; telecommunications networks, where switches redistribute data loads; and logistics applications such as freight cargo and last-mile delivery.
		
		Formally introduced by \citet{Okelly86}, these problems involve installing \textit{hubs} that serve as consolidation, sorting, and redistribution facilities for servicing demand, which is expressed by a given set of flows that must be routed between origin-destination pairs (commodities) of an existing network. Consolidation at hubs generates economies of scale, leading to lower transportation unit costs compared to direct point-to-point systems.
		
		Broadly speaking HLPs involve two types of decisions. On the one hand deciding the set of hubs to be activated and the interhub connections (backbone network); on the other hand, determining the paths for routing the commodities through the backbone network.
		
		In this work we consider an HLP with a multiple allocation (MA) policy, according to which non-hub nodes may be connected to multiple hubs, allowing flows originated at (or with destination to) a given non-hub node to be routed through different hubs. MA-HLPs are among the most studied problems in this area and some \textit{fundamental} HLPs belong to this class, namely the so called MA-$p$HLP and MA-HLP. In the MA-$p$HLP a fixed number $p$ of hubs must be activated and the objective is the minimization of the routing of the commodities. In contrast, in the MA-HLP the number of hubs to activate is not fixed, but each activated hub incurs a setup cost. Its objective is the minimization of the total setup costs plus the routing costs. Both models share an optimality property, according to which, there is an optimal solution where the backbone network is the complete graph induced by the activated hubs. According to this property, there is an optimal solution in which the routing path of each commodity consists of at most three legs: one access arc, one interhub arc, and one distribution arc. This property has been intensively exploited in the formulations proposed in the literature.
				
		Over the years, various formulations for MA-HLPs have been proposed. \citet{Campbell94} developed the first linear formulations. Subsequent improvements were proposed by \citet{Skorin96}, \citet{Hamacher04} and \citet{Marin-et-al06}. All these formulations use path-based variables, that is, 4-index variables which explicitly model the route between an origin-destination pair. While such models produce tight linear programming (LP) bounds, they suffer from scalability issues, due to their large number of variables ($\mathcal{O}(n^4)$). To address this, \citet{Ernst98} introduced a 3-index flow-based formulation for the MA-$p$HLP. This formulation produces a weaker LP bound than its 4-index counterpart, although it uses fewer variables ($\mathcal{O}(n^3)$), allowing larger instances to be solved.
				
		Formulations that use 2-index variables have also been developed for the MA-$p$HLP and the MA-HLP. \citet{Garcia2} proposed a radius-based formulation for the MA-$p$HLP, with routing costs defined by a precomputed sorted set, presenting results with instances of up to 200 nodes. Finally, \citet{ContrerasFernandez14} introduced a formulation based on supermodular inequalities for a large class of MA-HLPs, which can be particularized for the MA-HLP and the MA-$p$HLP. For the latter, this formulation is equivalent to the formulation by \citet{Garcia2}.
		
		In this paper we focus on improving the formulation by \citet{ContrerasFernandez14} for the MA-HLP. We reinforce the supermodular inequalities and demonstrate that the resulting formulation achieves the LP bounds of the tightest path-based formulations of \citet{Hamacher04} and \citet{Marin-et-al06}. Figure \ref{fig:lit_rev} provides a visual overview of the relationships among these formulations and our contribution.
		
		The remainder of this paper is structured as follows. Section \ref{sec:notation} introduces the notation and provides a formal problem definition of the MA-HLP. Section \ref{sec:lit_formulations} reviews existing path-based formulations and those presented by \citet{ContrerasFernandez14}. In Section \ref{sec:reinfore-super}, we present our reinforced formulation, which uses 2-index variables and supermodular constraints. We analyze its properties, showing that it is as tight as the tightest path-based formulation. Computational results and discussions are presented in Section \ref{sec:computational}, solving instances of up to 200 nodes. Finally, conclusions and future research directions are outlined in Section \ref{sec:conclusions}.
		
		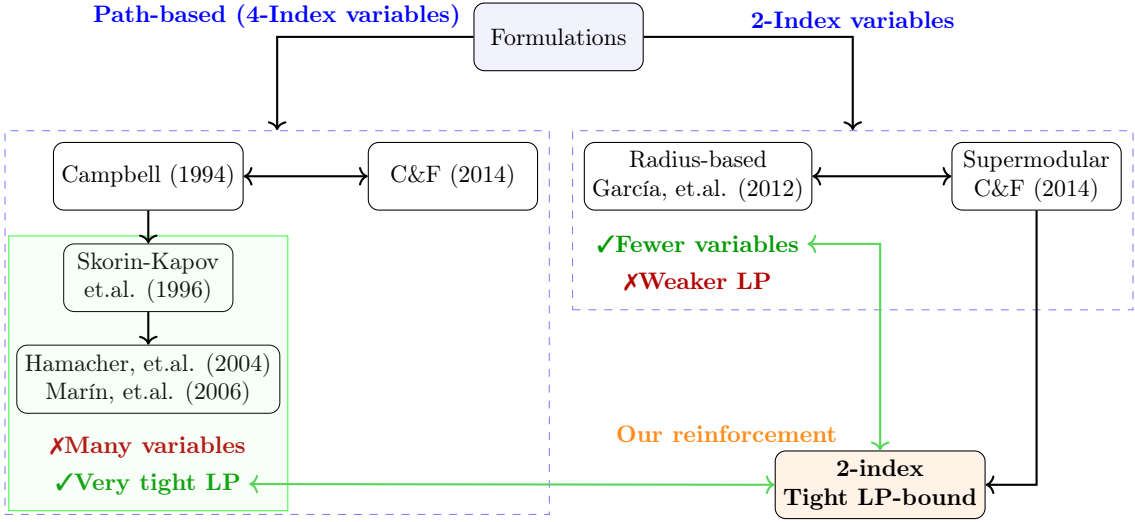
\begin{figure}[H]
			\centering
			\begin{tikzpicture}[
	scale=0.90, every node/.style={transform shape},
	node distance=1.5cm,
	box/.style={draw, rounded corners, align=center, minimum width=2.5cm, minimum height=1cm},
	arrow/.style={->, thick},
	arc/.style={<->, thick},
	pro/.style={text=green!60!black, font=\normalsize},
	con/.style={text=red!70!black, font=\normalsize},
	]
	
	\node[box, fill=blue!5] (formulations) {Formulations};
	
	\node[box, below left of = formulations, xshift=-5cm, yshift=-1.0cm] (campbell) {Campbell (1994)};
	\node[box, below of = campbell] (skorin) {Skorin-Kapov\\et.al. (1996)};
	\node[box, below of = skorin] (hamacher) {Hamacher, et.al. (2004) \\ Marín, et.al. (2006)};
	\node[box, right of = campbell, xshift=3.0cm] (cf) {C\&F (2014)};
	
	\node[con, below of = hamacher, yshift=.50cm](pathcon) {\xmark\textbf{Many variables}};
	\node[pro, below of = pathcon, yshift=.95cm](pathpro) {\cmark\textbf{Very tight LP}};
	
	\node[draw=blue!50, dashed, fit=(campbell) (cf) (skorin) (hamacher) (pathcon) (pathpro), inner sep=0.15cm] (pathgroup) {};
	
	\node[draw=green!75, fill=green!25, fill opacity=0.10, fit = (skorin) (hamacher) (pathpro), inner sep=0.10cm] (pathgrouptight) {};
	
	\node[box, below right of = formulations, xshift=1.0cm, yshift=-1.0cm] (garcia) {Radius-based\\ García, et.al. (2012) };
	\node[box, right of = garcia, xshift=3.5cm] (cfs) {Supermodular\\ C\&F (2014)};
	\node[box, below left of= cfs, yshift=-3.5cm, xshift=-1.25cm, fill=orange!10, label={[orange!90] above left:\textbf{Our reinforcement}}] (ours) {\textbf{2-index}\\ \textbf{Tight LP-bound}};
	
	\node[pro, below of = garcia, yshift=.50cm](twopro) {\cmark\textbf{Fewer variables}};
	\node[con, below of = twopro, yshift=.95cm](twocon) {\xmark\textbf{Weaker LP}};
	
	\node[draw=blue!50, dashed, fit= (garcia) (cfs) (twopro) (twocon), inner sep=0.15cm](twogroup){};
	
	\draw[arrow] (formulations.west) -| (pathgroup.north) node[midway, above, blue] {\textbf{Path-based (4-Index variables)}};
	\draw[arrow] (campbell.south) -- (skorin.north);
	\draw[arc] (campbell.east) -- (cf);
	\draw[arrow] (skorin.south) -- (hamacher.north);
	
	\draw[arrow] (formulations.east) -| (twogroup.north) node[midway, above, blue] {\textbf{2-Index variables}};
	\draw[arc] (garcia.east) -- (cfs.west);
	\draw[arrow] (cfs.south) |- (ours.east);

	\draw[<->, green!75!black!65, thick] (pathpro.east) -- (ours.west);
	\draw[<->, green!75!black!65, thick] (twopro.east) -| (ours.north);
	
\end{tikzpicture}
			\caption{Relationship between the relevant formulations and our contribution}
			\label{fig:lit_rev}
		\end{figure}
	
	\section{Notation and problem definition}\label{sec:notation}
		Consider a complete network $N=(V, A)$, with node set $V=\{1, 2,\dots, n\}$ and arc set $A=\{(i, j): i, j\in V\}$.  Associated with each arc $(i,j)\in A$, there is a unit routing cost, denoted by $c_{ij}$. We assume that routing costs satisfy the triangle inequality. Let $E=\{ij: (i, j)\in A \text{ or } (j, i)\in A, i< j\}$ denote the set of (undirected) edges underlying the arc set $A$.
				
		We assume {that every node $i\in V$ is a potential hub location with an associated setup cost $f_i$.} Any edge {$ij\in E$} connecting a pair of potential hubs can be activated as an \textit{interhub edge} (or just \textit{hub edge}). {Note that since all nodes are potential hubs, the set of potential interhub edges is precisely $E$}. These assumptions can be made without loss of generality since arbitrarily large routing costs $c_{ij}$ can be assigned to non-existing arcs and arbitrarily large  setup costs $f_i$ can be assigned to nodes that are not potential hub nodes. To alleviate notation, in the remainder of this paper any edge $ij\in E$ with $i, j\in V$, $i<j$ will be indistinctively denoted by $ji$.
				
		Service demand is given by a set of commodities defined over pairs of users, indexed by a set $R$. {Each commodity $r\in R$ is defined by a triplet $(o^r,d^r,w^r)$, where a demand $w^r\geq 0$} must be routed from origin node $o^r\in V$ to destination node $d^r\in V$. When the context is clear, we refer to the origin and destination of a commodity $r$ simply as $o$ and $d$, respectively. Without loss of generality, we assume the graph induced by commodities with $w^r>0$ is connected; otherwise, the problem could be decomposed into independent subproblems for each connected component.
		
		Finally, the set of edges incident with a given node $i\in V$, is denoted by $\delta(i)=\{e\in E: e=ij, \text{ or } e=ji \text{ with } j\in V\}$.
		
		\subsection{Problem definition}
			We study the {MA-HLP}, in which any feasible solution is given by a set of activated hub nodes, a set of activated interhub edges, and a set of  routing paths, one for each commodity. The two endnodes of any activated interhub edge must be activated hubs as well. The routing  path of a given commodity  $r\in R$ must be of the form $o^r - i_1 - \dots - i_t- d^r$ where: $(a)$ every intermediate node is activated as a hub node  and $(b)$ each pair of consecutive intermediate nodes is connected by an activated interhub edge. The routing cost of commodity $r\in R$ through a given routing path is $w^r\left(\gamma c_{o^rk_1}+\alpha \sum_{i=1}^{t-1} c_{k_i k_{i+1}}+ \theta c_{k_td^r}\right)$, where $0\le \alpha\leq 1$ is a given interhub discount cost factor and $\gamma, \theta >\alpha$ are factors applied {to the access (origin to first hub) and distribution (last hub to destination)} arcs, respectively.
					
			Since we assume that the input graph is complete, costs satisfy the triangle inequality, and any edge can be activated as an interhub edge incurring no setup cost, we will only consider routing paths with at most one interhub edge, since any intermediate subpath $i_1 - \dots - i_t$ can be substituted by a single interhub arc {$(i_1,i_t)$} with a routing cost not greater than that of the subpath. Hence, in the following, we will assume that routing paths are of the form $o^r - i - j - d^r$, where, $i,j$ are activated hubs, and $ij\in E$ is an activated interhub edge. The routing cost for commodity $r\in R$ on this path is ${C_{rij}}=w^{r}\left(\gamma c_{o^ri}+\alpha c_{ij}+\theta c_{jd^r}\right)$. If $o^r$ is itself a hub, then  $i=o^r$ and consider the first leg $(o^r,i)$ as a fictitious arc. Similarly, if $d^r$ is a hub, then $j=d^r$ and consider the last leg $(j,d^r)$ as a fictitious arc.
	
	\section{Existing formulations for the $MA-HLP$}\label{sec:lit_formulations}		
		In this section we present some existing formulations for the MA-HLP, relevant to the work that we develop. Initially, we focus on traditional formulations in which the routing of the commodities is determined by 4-index path variables \cite{Skorin96,Hamacher04,Marin-et-al06}. Some of these formulations are celebrated because they produce very tight LP bounds. Then, we discuss the adaptation to the MA-HLP of the two formulations introduced in \citet{ContrerasFernandez14}, which both produce the same LP bound. The first one uses 4-index path variables, whereas the other one is based on supermodular properties of HLPs and uses 1- and 2-index decision variables associated with the activated hubs and interhub links, respectively.
		
		\subsection{Formulations with path variables}
		\citet{Skorin96} provided formulations for several variants of the MA-HLP. All formulations use one set of binary decision variables $z_i\in\{0, 1\}$, $i\in V$ for the hubs that are activated, plus one additional set of continuous \textit{path variables} $X_{rij}\geq 0$, which indicate the fraction of the flow associated with commodity $r\in R$ that is routed through the interhub arc $(i, j)\in A$. Then, for the $MA-HLP$, the formulation of \citet{Skorin96}  is the following:
		\begin{subequations}\label{SK}
			\begin{align}
				(SK)\qquad  \min & \sum_{i\in V}f_i z_i\,+ \sum_{r\in  R}\sum_{i,j\in V} {C_{rij}}{X_{rij}}&& \label{SK-of}\\
				\mbox{s.t. }
				& \sum_{i,j\in V}{X_{rij}}=1 &&   r\in  R \label{SK:route-all}\\
				& \sum_{j\in V}{X_{rij}}\leq z_i && r\in  R, i\in V \label{SK-X-zi}\\
				& \sum_{i\in V}{X_{rij}}\leq z_j && r\in  R, i\in V \label{SK-X-zj}\\
				& z_{i}\in\{0, 1\},\, i\in V;\, {X_{rij}}\geq 0,\, r\in  R,\, i,j\in V.\label{MCL:domain}
			\end{align}
		\end{subequations}
		
		Although variables ${X_{rij}}$ are defined as continuous, it is well-known that there is an optimal solution to the MA-HLP where these variables take binary values. Hence these variables can be interpreted as the indicator of the routing path used for the different commodities. By constraints \eqref{SK-X-zi}-\eqref{SK-X-zj} the two intermediate nodes in routing paths must be activated hubs. Indeed, when $i=j$ there is just one single intermediate hub.
		
		\citet{Hamacher04} and \citet{Marin-et-al06} developed reinforcements of $SK$ in which the sets of constraints \eqref{SK-X-zi}-\eqref{SK-X-zj} are \textit{merged} in one single set of constraints:
		
		\begin{subequations}\label{MCL}
			\begin{align}
				(HLP_{MA})\qquad  \min & \sum_{i\in V}f_i z_i\,+ \sum_{r\in  R}\sum_{i,j\in V} {C_{rij}}{X_{rij}}&& \tag{\ref{SK-of}}\\
				\mbox{s.t. }
				& \sum_{i,j\in V}{X_{rij}}=1\, &&   r\in  R \tag{\ref{SK:route-all}}\\
				& {X_{rii}}+ \sum_{j\in V\setminus \{i\}}\left({X_{rij}}+{X_{rji}}\right) \leq z_i && r\in  R, i\in V \label{MCL:X-zi-zj}\\
				& z_{i}\in\{0, 1\},\, i\in V;\, {X_{rij}}\geq 0,\, r\in  R,\, i,j\in V. \tag{\ref{MCL:domain}}
			\end{align}
		\end{subequations}
		The main difference between the formulations of \citet{Hamacher04} and \citet{Marin-et-al06} is that the later uses exactly the same decision variables as $SK$, whereas the former, for a given $r\in  R$, uses one single decision variable ${X_{rij}}$ for each pair $i, j\in V$, $i\ne j$, with a routing cost ${F_{rij}}=\min\{{C_{rij}}, {C_{rji}}\}$.
				
		The idea of determining \textit{a priori} the best routing cost of a given commodity $r\in R$ \textit{through} a pair of intermediate nodes $i, j\in V$ is further exploited in the  formulations introduced in \citet{ContrerasFernandez14}. These formulations are very general, in the sense that they comprise as special cases many classes of {uncapacitated MA-HLPs,} including not only the MA-HLP but also problems with setup costs for the activated interhub edges, problems with cardinality constraints for both the number of activated hubs and the number of activated interhub edges,  and hub-arc location problems. Below we adapt the formulations of \cite{ContrerasFernandez14} to the MA-HLP, which allows us to compare these formulations and our extensions with $HLP_{MA}$.		
		
		\citet{ContrerasFernandez14} use the value $\overline F_{re}=\min\{F_{re},\, H_{ri},\ H_{rj}\}$ for the optimal cost for routing commodity $r\in R$ \emph{via} interhub edge $e=ij\in E$, where, as indicated above, $F_{er}=\min\{C_{rij}, C_{rji}\}$, is the routing cost of the best traversal of edge $ij$ and $H_{ri}=c_{o^ri}+c_{id^r}$  and $H_{rj}=c_{o^rj}+c_{jd^r}$ are the routing costs through single hubs $i$ and $j$, respectively.
		
		The decision variables of the path formulation of \citet{ContrerasFernandez14} are the following:
		\begin{itemize}
			\item $y_{e}\in\{0,1\}$ for all $e\in E$. $y_e=1$ if and only if edge $e$ is activated as an interhub edge.
			\item $z_i\in\{0, 1\}$ for all $i\in V$. $z_i=1$ if and only if a hub is activated at node $i$
			\item ${x_{re}}\in\{0, 1\}$ for all $e\in E$, $r\in  R$. ${x_{re}}=1$ if and only iff commodity $r$ is routed using its \textit{best} path through interhub edge $e$ (as determined by the value ${\overline F_{re}}$).
			
			Similarly to the above variables $\mathbf X$, even if the nature of variables $\mathbf{x}$  is discrete, in formulation $CF-P$ they can be relaxed, as it is usual with path variables in the absence of  capacity constraints (see, e.g., \citet{Hamacher04,Marin-et-al06,Contrerasetal11,ContrerasDiaz11}).
		\end{itemize}
		
		The adaptation to the MA-HLP of the path formulation of \citet{ContrerasFernandez14} simply eliminates from the objective function the term corresponding to the cost of the activated interhub edges, as well as the cardinality constraints on the number of activated hubs and interhub edges. It is as follows:
		
		\begin{subequations}
			\begin{align}
				(CF-P)\qquad  \min & \sum_{i\in V}f_iz_i\, + \sum_{r\in  R}\sum_{e\in E}{\overline F_{re}}{x_{re}}&& \label{of0}\\
				\mbox{s.t. }
				& \sum_{e\in E}{x_{re}}=1\, &&   r\in  R\label{const:route-all}\\
				& {x_{re}}\leq y_e\, &&   r\in  R,\, e\in E\label{const:route-edge}\\
				& y_e\leq z_{i}\, &&     e=ij\in E\label{const:y-z1}\\
				& y_e\leq z_{j}\, &&     e=ij\in E\label{const:y-z2}\\
				& y_e\in\{0, 1\}, e\in E; z_{i}\in\{0, 1\},\, i\in V;\, {x_{re}}\geq 0,\, r\in  R, e\in E.\label{domain-CF}
			\end{align}
		\end{subequations}
		
		\begin{rem}\label{Remark1}
			Note that formulation $CF-P$ is only valid for instances whose optimal solution activates at least one interhub edge (i.e., two hubs). By constraints \eqref{const:route-all}-\eqref{const:route-edge}, each commodity is routed \textit{via} one activated edge, even if its routing path does not traverse the edge and only one of its two endnodes is used. Still, in such a case, constraints \eqref{const:y-z1}-\eqref{const:y-z2} force that both endnodes are activated as hubs.
			
			\begin{figure}[H]
				\begin{subfigure}[t]{0.5\textwidth}
					\centering
					\includegraphics[width=0.5\textwidth]{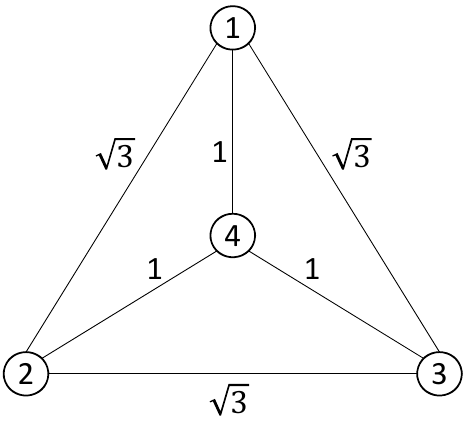}
					\caption{Instance for which $CF-P$ does not produce an optimal\\ MA-HLP solution}\label{Fig1}
				\end{subfigure}
				\begin{subfigure}[t]{0.5\textwidth}
					\centering
					\includegraphics[width=0.5\textwidth]{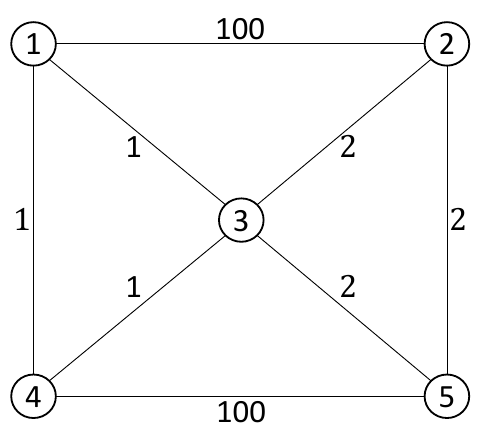}
					\caption{Instance for which $HLP_{MA}$ produces a tighter LP\\ bound than $CF-P$}\label{Fig2}
				\end{subfigure}
				\caption{Networks illustrating Remark \ref{Remark1}}
			\end{figure}
			
			This happens, for instance, in the example shown in Figure \ref{Fig1} with set of commodities $R=\{(o, d): o,d\in V, o<d\}$, $w^r=1$, for all $r\in R$, $f_i=10$, for all $i\in V$, edge costs $c_{ij}$ shown next to the edges, and $\alpha=1$.

			In the following, unless otherwise stated, we assume that we consider instances whose optimal solution activates at least two hubs. This is the case with most of the HLP instances available in the literature. Note that for such instances $CF-P$ is a valid formulation. Given that optimal solutions activate at least two hubs, and it involves no activation costs for interhub edges, $CF-P$  will have an optimal solution in which all edges connecting a pair of activated hubs are activated (at least one activated interhub edge).
		\end{rem}
				
		{In general,} $HLP_{MA}$ produces tighter LP bounds than $CF-P$. {This follows from the fact that} any feasible solution to the LP relaxation of  $CF-P$ can be expressed in terms of a feasible solution to the LP relaxation of $HLP_{MA}$, with ${x_{re}}={X_{rij}}+{X_{rji}}+{X_{rii}}+{X_{rjj}}$, while the reverse is not true. As a consequence, the LP bound of $CF-P$ can be rather weak in comparison to that produced by $HLP_{MA}$. This is illustrated in the instance of Figure \ref{Fig2}, for which the set of commodities $R=\{(o, d): o,d\in V, o<d\}$, $w^r=1$, for all $r\in R$, and $f_i=1$, for $i\in \{2, 3, 5\}$ and $f_i=100$, otherwise. The values next to depicted edges indicate their routing costs $c_{ij}$ with non-depicted edges having arbitrarily large routing costs.		
		For this instance, the LP relaxation of $HLP_{MA}$ produces an integer optimal solution of value 24, where $z_2=z_3=1$ and $z_i=0$ otherwise. The path variables values are ${X_{r33}}=1$, for all $r\in R\setminus\{(2, 5)\}$, $X_{2522}=1$, and ${X_{rij}}=0$ otherwise. On the contrary, the LP relaxation of $CF-P$ produces {a fractional solution with an objective value of 23.5. This solution has $z_2=z_3=z_5=0.5$ (and $z_1=z_4=0$), with ${x_{re}}=0.5$ for $e\in \{23,\,35\}$ and ${x_{re}}=0$ otherwise for all $r\in R$.}
				
		The poor performance of the LP relaxation of $CF-P$ can be attributed to the following. When, for $r\in R$, $\bar e\in E$ given, the value $\overline F_{\bar er}$ corresponds to a path that uses one of the endnodes of $\bar e$, say $i_u$, as its single intermediate vertex, i.e. $\overline F_{\bar er}=H_{i_ur}$,  then it may happen that there exist other edges also incident with $i_u$, $e\in\delta(i_u)$, $e\ne \bar e$, with the same routing value, i.e., such that $\overline F_{er}=\overline F_{\bar er}$. In such a case, formulation $CF-P$ may produce solutions such that $x_{er}=\varepsilon$, for all $e\in\delta(i_u)$ such that $\overline F_{er}=\overline F_{\bar er}$ and $z_{i_u}=\varepsilon$.

		For the example of Figure \ref{Fig2}, this happens for commodity $r=(1,4)$. For this commodity, edges $23$ and $35$ produce the same single-hub optimal path via node 3, which has a routing cost of ${\overline F_{r23}}={\overline F_{r35}} = {H_{r3}} = 2$. This is reflected in the LP solution where $x_{r23}$ = $x_{r35}=z_3=\varepsilon=0.5.$
		
		\subsection{A reinforcement of $CF-P$}
			Below we develop a reinforcement of $CF-P$, that will be denoted as $FZ-P$, which produces the same LP bound as $HLP_{MA}$. In Section \ref{sec:reinfore-super}, we will see how to derive a formulation based on \textit{supermodular} constraints that produces the same LP bound as $FZ-P$, and thus as $HLP_{MA}$, while retaining the 1- and 2-index decision variables only.
			
			Indeed, formulation $CF-P$ can be reinforced by substituting constraints \eqref{const:y-z1}-\eqref{const:y-z2} by the following ones, which mimic the rationale of \eqref{MCL:X-zi-zj}:
			\begin{subequations}
			\begin{align}
			& \sum_{e\in\delta(i)}{{x_{re}}}\leq z_i\, &&   r\in  R,\, i\in V.\label{const:route-edge+}
			\end{align}
			\end{subequations}
			
			The resulting formulation is:
			
			\begin{subequations}
			\begin{align}
			(FZ-P)\qquad  \min & \sum_{i\in V}f_iz_i\, + \sum_{r\in  R}\sum_{e\in E} {\overline F_{re}}{{x_{re}}}&& \tag{\ref{of0}}\\
			\mbox{s.t. }
			& \sum_{e\in E}{{x_{re}}}=1\, &&   r\in  R\tag{\ref{const:route-all}}\\
			& {{x_{re}}}\leq y_e\, &&   r\in  R,\, e\in E\tag{\ref{const:route-edge}}\\
			& \sum_{e\in\delta(i)}{{x_{re}}}\leq z_i, &&   r\in  R,\, i\in V\tag{\ref{const:route-edge+}}\\
			& y_e\in\{0, 1\}, e\in E; z_{i}\in\{0, 1\},\, i\in V;\, {{x_{re}}}\geq 0\, r\in  R, e\in E.\tag{\ref{domain-CF}}
			\end{align}
			\end{subequations}
			
			As  we next prove, formulation $FZ-P$ is essentially the same as $HLP_{MA}$. Before, we introduce some additional notation and analyze some properties that will be useful to relate $HLP_{MA}$ and  $FZ-P$.
			
			For $r\in  R$, let
			\begin{itemize}
			\item $E^r=\{e=ij\in E:  {\overline F_{re}}={F_{re}}<\min\{{H_{ri}}, {H_{rj}}\}\}$.
			
			When ${F_{re}}\geq \min\{{H_{ri}}, {H_{rj}}\}$, there is an optimal solution to the MA-HLP where commodity $r$ is not routed using interhub edge $e$, as it would be at least as good to route it using a single-hub path through $i$ or $j$. According to this definition we assume that ties are broken in favor of single-hub paths. Thus, for the optimal routing of a given commodity $r\in  R$, the only edges that need to be considered are those indexed in $E^r$.
			
			\item $V^r{\subseteq V}$ the
			set of {nodes} that can be the only intermediate hub in an optimal routing path for commodity $r$. {A node $i$ belongs to $V^r$ if an optimal path for $r$ would route $w^r$ directly through $i$ rather than traversing an activated edge $e(i,r)\in \delta(i)$, thus} $V^r=\{i{\in V}: \exists e(i,r){\in\delta(i) \text{ s.t. } H_{ri} < F_{re}}\}$. With very few exceptions $V^r=V$. More precisely, let $U^r=\{i\in V: c_{id^r} >c_{jd^r}  \text{ for all } j\in V\}$. That is, $U^r$ is the set of nodes $i\in V$ such that the unit routing of the distribution arc $(i, d^r)$ is higher than the unit routing cost of any other distribution arc. Thus, when $i\in U^r$ the distribution arc $(i, d^r)$ is the most expensive distribution arc for commodity $r$. We will see that $V^r=V\setminus U^r$.
			
			\end{itemize}
			We can now formally state the following properties of formulation $HLP_{MA}$ that will be useful to compare it with $FZ-P$.
			\begin{lema}\label{lemita}
			There is an optimal solution to $HLP_{MA}$, $({\mathbf{z}^*}, {\mathbf{X}}^*)$, such that, for all $r\in R$,
			\begin{itemize}
			\item[$(i)$] ${X^{*}_{rij}}+ {X^{*}_{rji}}=0$ for all $e\in E\setminus E^r$.
			
			\item[$(ii)$] {For all $i\in V\setminus U^r$} there exists some edge, $e(i,r)=i\bar{\jmath}\,{\in \delta(i)}$, {such that} ${X^{*}_{ri\bar{\jmath}}}=0$, i.e., $e(i,r)$ would not be traversed from $i$ to $\bar \jmath$ in an optimal routing of commodity $r$, even if it were activated.
			
			\item[$(iii)$] ${X^{*}_{rii}}=0$ for all $i\in U^r$.
			\end{itemize}
			\end{lema}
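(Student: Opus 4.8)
The plan is to start from any optimal solution $(\mathbf{z}^*,\mathbf{X}^*)$ of $HLP_{MA}$ and reshape it, one commodity at a time, by local rerouting exchanges that never increase the objective and never destroy feasibility. Since there are no capacity constraints, I would first invoke the fact (already noted for $HLP_{MA}$) that there is an optimal solution in which each $w^r$ travels on a single path, i.e.\ the $X_{r\cdot}$ are $0/1$. The key structural observation is that every exchange below moves flow only among the variables $X_{r\cdot}$ of one commodity, and each such move leaves the term for a node that stays on the path unchanged while only decreasing the left-hand sides of the coupling constraints \eqref{MCL:X-zi-zj} at the nodes that are dropped; hence feasibility is preserved, $\mathbf{z}^*$ need not change, and the treatments of distinct commodities do not interfere.

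For part $(i)$, recall that $e=ij\notin E^{r}$ means $F_{re}\ge\min\{H_{ri},H_{rj}\}$, so the cheapest traversal of $e$ is no better than routing $r$ through whichever endpoint attains $\min\{H_{ri},H_{rj}\}$. If $X^{*}_{rij}>0$ or $X^{*}_{rji}>0$, the traversal in use costs at least $F_{re}$, so I would transfer that flow to the single-hub variable $X_{rii}$ or $X_{rjj}$ realising the minimum; both endpoints are already activated because the traversal used them, so the target hub is available and the objective does not increase, making $X^{*}_{rij}+X^{*}_{rji}=0$. Part $(ii)$ is the sharpest: for $i\in V\setminus U^{r}$ I would use the identity $V^{r}=V\setminus U^{r}$ to obtain an incident edge $e(i,r)=i\bar{\jmath}$ with $H_{ri}<F_{r\,e(i,r)}\le C_{ri\bar{\jmath}}$, so routing $r$ as the sole hub $i$ is \emph{strictly} cheaper than traversing $e(i,r)$ outward from $i$. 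As $i$ is activated whenever that traversal carries flow, any positive $X^{*}_{ri\bar{\jmath}}$ could be rerouted to $X_{rii}$ with a strict decrease, which is impossible at optimality; hence $X^{*}_{ri\bar{\jmath}}=0$.

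For part $(iii)$, take $i\in U^{r}$. Intuitively single-hub routing through $i$ is dominated, because $i$ realises the most expensive distribution arc $c_{id^{r}}$ and $\alpha<\theta$ means the long leg $i\to d^{r}$ is charged at the cheaper interhub rate once a second hub is used. To obtain a reroute that stays among already-activated hubs, I would instead read off from $V^{r}=V\setminus U^{r}$ that $i\notin V^{r}$, i.e.\ $H_{ri}\ge F_{re}$ for \emph{every} $e\in\delta(i)$. Choosing $e=ik$ for any second activated hub $k$ (one exists since we assume at least two hubs are activated, and the edge $ik$ is available because optimal solutions activate the complete backbone on the hub set), the cheaper traversal of $ik$ costs at most $H_{ri}$, so I would push the sole-hub flow $X^{*}_{rii}$ onto that traversal without increasing cost, giving $X^{*}_{rii}=0$.

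To keep the three exchanges mutually consistent I would apply them in the order $(iii)$, then $(i)$, then $(ii)$. The $(iii)$ step deposits the freed flow on a traversal $i\to k$; if $ik\notin E^{r}$, the subsequent $(i)$ step moves it to the hub attaining $\min\{H_{ri},H_{rk}\}$, and since $F_{r,ik}\le H_{ri}$ forces that minimum to be $H_{rk}$ with $k\neq i$, the flow lands on the legitimate sole hub $k\in V\setminus U^{r}$ rather than returning to $i$. Because the $(i)$ and $(ii)$ steps only ever delete traversals and create single-hub flow at a node already on the path, they never manufacture the outbound traversals forbidden by $(ii)$ at the $V\setminus U^{r}$ nodes nor the sole-hub use forbidden by $(iii)$ at the $U^{r}$ node, so the process terminates with all three properties holding simultaneously. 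The main obstacle is not the exchanges themselves but the supporting identity $V^{r}=V\setminus U^{r}$ on which parts $(ii)$ and $(iii)$ rest — in particular the inclusion $V\setminus U^{r}\subseteq V^{r}$, which requires exhibiting, for each non-farthest $i$, an incident edge both of whose traversals are dominated by the single hub $i$. Proving this by combining the triangle inequality with the strict ordering $\alpha<\gamma,\theta$ (and, for the reverse inclusion, the cheap traversal $i\to d^{r}$ available when $i$ is farthest from $d^{r}$) is the delicate step and the natural place to concentrate the work.
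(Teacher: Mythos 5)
Your exchange framework and your part (i) are fine (they match the paper's one‑line argument ``follows from the definition of $E^r$''), but the proof has a genuine circularity at its core: you make parts (ii) and (iii) rest on the identity $V^r=V\setminus U^r$, whereas in the paper this identity is a \emph{conclusion} drawn from this very lemma, and its two inclusions are exactly the cost-domination facts that (ii) and (iii) assert. You acknowledge this by deferring it as ``the delicate step'', but that step is the entire content of the lemma; what you actually supply (restriction to $0/1$ routings, flow transfers that preserve \eqref{MCL:X-zi-zj}) is the routine part. For (ii) the missing argument is two lines and needs neither the triangle inequality nor $\alpha<\gamma,\theta$: since $i\notin U^r$ there exists $\bar{\jmath}$ with $c_{id^r}\le c_{\bar{\jmath}d^r}$, and then $C_{ri\bar{\jmath}}=c_{o^ri}+\alpha c_{i\bar{\jmath}}+c_{\bar{\jmath}d^r}\ge c_{o^ri}+\alpha c_{i\bar{\jmath}}+c_{id^r}\ge H_{ri}$, so an optimal solution that breaks ties in favour of single-hub paths never traverses $i\bar{\jmath}$ outward from $i$. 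Note this is a weak inequality; your version insists on the strict bound $H_{ri}<F_{re(i,r)}$, which neither follows from this argument nor is needed, since the lemma only claims that \emph{some} optimal solution has the stated property.

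For (iii) the gap is not just deferral but a quantifier error. Your reroute of $X^*_{rii}$ onto the edge $ik$, with $k$ an activated hub, needs $F_{r,ik}\le H_{ri}$ for that \emph{specific} $k$; you derive it from ``$H_{ri}\ge F_{re}$ for every $e\in\delta(i)$'' (i.e., $i\notin V^r$), and you propose to prove that universal statement from the cheap traversal $i\to d^r$. That observation only shows that the single edge $id^r$ satisfies $F_{r,id^r}<H_{ri}$ --- an existential fact --- and $k$ need not be $d^r$, nor need $d^r$ be a hub. Your route can in fact fail: if $k$ lies roughly opposite $i$ with respect to $d^r$ (so that $c_{ik}\approx c_{id^r}+c_{kd^r}$ with $c_{kd^r}$ close to $c_{id^r}$) and $o^r$ is near $i$, then \emph{both} traversals of $ik$ cost more than $H_{ri}$, and your exchange strictly increases the routing cost. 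The paper's own proof of (iii) is instead a pointwise domination claim, $H_{ri}>C_{rij}$ for all $j\in V$, pulled directly from the defining property $c_{id^r}>c_{jd^r}$ of $U^r$; whatever one thinks of how the interhub term $\alpha c_{ij}$ gets absorbed in that chain, that is the inequality your argument would have to produce for the activated hub $k$, and nothing in your sketch supplies it. A smaller final point: the paper's subsequent remark states that the lemma also applies to the LP relaxation of $HLP_{MA}$; your opening reduction to integral routings forfeits that extension, while pure cost-comparison proofs such as the paper's retain it.
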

			\proof
			\begin{itemize}
			\item[$(i)$] Follows directly from the definition of $E^r$.
			
			\item[$(ii)$]  {Let $i\in V\setminus U^r$. Then, there exists $\bar{\jmath}\in V$ such that $c_{i d^r}\leq c_{\bar{\jmath}d^r}$. Then, $C_{ri\bar{\jmath}}=c_{o^ri}+\alpha c_{i\bar{\jmath}} + c_{\bar{\jmath} d^r}\geq c_{o^r i}+\alpha c_{i\bar \jmath} + c_{i d^r}\geq c_{o^r i}+ c_{i d^r} =H_{r i}$, so $e(i,r)=i\bar \jmath$ will not be traversed from $i$ to $\bar \jmath$ in an optimal routing of commodity $r$, even if it were activated.}

			\item[$(iii)$] Let now {$i\in U^r$}. Then, by definition of $U^r$, it holds that, for all $j\in V$, $c_{id^r}>c_{jd^r}$. Hence
			\[H_{ri}=c_{o^ri}+c_{id^r}>c_{o^ri}+c_{jd^r}\geq c_{o^ri}+\alpha c_{ij}+c_{jd^r}= C_{rij},\quad \text{for all }j\in V\]
			and the result follows since the the single-hub routing of commodity $r$ through hub $i$ is worse than the routing path through interhub arc $(i,j)$ for all $j\in V$. Hence, in any solution to $HLP_{MA}$, ${X^{*}_{rii}}=0$ for all $i\in U^r$. \hfill$\blacksquare$
			
			\end{itemize}
			
			We thus conclude that $V^r=V\setminus U^r$. In the following, for all $i\in V^r$ we denote by $e(i,r)\in \delta(i)$ an arbitrarily chosen edge that would not be traversed from $i$ to $\bar{\jmath}$ in the routing of commodity $r$, even if it were activated.
			
			\begin{rem}
			Note that Lemma \ref{lemita} also applies to the LP relaxation of $HLP_{MA}$.
			\end{rem}
			
			\begin{propo}
			The  LP bound of $FZ-P$ is the same as that of $HLP_{MA}$.
			\end{propo}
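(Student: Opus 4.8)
The plan is to establish the equality of the two LP bounds by proving two inequalities, each obtained by transforming a feasible solution of one relaxation into a feasible solution of the other with matching objective value. Throughout I would work in the space of the shared variables $\mathbf{z}$ together with the edge-flow variables $\mathbf{x}$ of $FZ-P$ and the path variables $\mathbf{X}$ of $HLP_{MA}$, observing first that in the LP relaxation of $FZ-P$ the variables $\mathbf{y}$ are inert: they carry no objective cost and appear only through $x_{re}\le y_e$, so one may set $y_e=1$ and reduce the relaxation to optimizing $(\mathbf{z},\mathbf{x})$ subject to $\sum_{e}x_{re}=1$ and $\sum_{e\in\delta(i)}x_{re}\le z_i$. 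The bridge between the models is the correspondence $x_{re}=X_{rij}+X_{rji}+X_{rii}+X_{rjj}$ already used to relate $CF-P$ and $HLP_{MA}$, refined here so that the objectives agree.

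First I would prove $v_{LP}(HLP_{MA})\le v_{LP}(FZ-P)$. Given any feasible $(\mathbf{z},\mathbf{y},\mathbf{x})$ of $LP(FZ-P)$, for each $r$ and each $e=ij$ I would route the mass $x_{re}$ through the cheapest of the three options recorded by $\overline F_{re}=\min\{F_{re},H_{ri},H_{rj}\}$: if $\overline F_{re}=F_{re}$ I set $X_{rij}\mathrel{+}=x_{re}$ or $X_{rji}\mathrel{+}=x_{re}$ according to which of $C_{rij},C_{rji}$ is smaller; if $\overline F_{re}=H_{ri}$ (resp.\ $H_{rj}$) I set $X_{rii}\mathrel{+}=x_{re}$ (resp.\ $X_{rjj}\mathrel{+}=x_{re}$). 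Then $\sum_{i,j}X_{rij}=\sum_{e}x_{re}=1$ and the objective is preserved term by term. For the merged constraint \eqref{MCL:X-zi-zj} at node $i$, the key observation is that $x_{re}$ contributes to node $i$ only when $e\in\delta(i)$, since each of the three routings of edge $e$ uses only the endpoints of $e$; hence the left-hand side of \eqref{MCL:X-zi-zj} is bounded by $\sum_{e\in\delta(i)}x_{re}\le z_i$. This direction is routine.

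The substantive direction is $v_{LP}(FZ-P)\le v_{LP}(HLP_{MA})$. Here I would start from an optimal solution $(\mathbf{z}^*,\mathbf{X}^*)$ of $LP(HLP_{MA})$ chosen to satisfy $(i)$–$(iii)$ of Lemma \ref{lemita}, and build an $FZ-P$ solution with $\mathbf{z}=\mathbf{z}^*$. The two-hub mass is aggregated edgewise, $x_{r,ij}\mathrel{+}=X^*_{rij}+X^*_{rji}$; by $(i)$ it is supported on $E^r$, where $\overline F_{r,ij}=F_{r,ij}=\min\{C_{rij},C_{rji}\}$, so this part costs no more than $C_{rij}X^*_{rij}+C_{rji}X^*_{rji}$. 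The single-hub mass $X^*_{rii}$, which by $(iii)$ is nonzero only for $i\in V^r=V\setminus U^r$, I would place on the edge $e(i,r)=i\bar{\jmath}$ supplied by $(ii)$; since $\overline F_{r,e(i,r)}\le H_{ri}$, this again does not increase cost. Because Direction A already gives $v_{LP}(HLP_{MA})\le v_{LP}(FZ-P)$, mere cost non-increase suffices to force equality, so exact cost matching need not be tracked.

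The main obstacle is verifying the degree constraints $\sum_{e\in\delta(k)}x_{re}\le z_k^*$ once the single-hub mass has been re-routed onto edges. The danger is that re-routing $X^*_{rii}$ onto $e(i,r)=i\bar{\jmath}$ also loads the partner endpoint $\bar{\jmath}$, which $HLP_{MA}$ never charged. This is exactly what property $(ii)$ is designed to control: $e(i,r)$ is chosen so that $X^*_{ri\bar{\jmath}}=0$, i.e.\ it is never traversed from $i$ to $\bar{\jmath}$, so the re-routed mass occupies a slot left empty in the $HLP_{MA}$ solution rather than stacking on top of existing flow. The crux is to convert this slot-availability into the quantitative bound $\sum_{e\in\delta(k)}x_{re}\le z_k^*$ at every node $k$, accounting simultaneously for $k$'s own two-hub flow, its own single-hub mass, and the incoming re-routed masses from neighbors whose chosen partner is $k$; the standing assumption that the optimal solution activates at least two hubs is what guarantees enough hub capacity to absorb these second endpoints. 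Establishing this inequality is the step I expect to demand the most care, and it is where Lemma \ref{lemita} does the real work. Combining the two inequalities yields $v_{LP}(FZ-P)=v_{LP}(HLP_{MA})$.
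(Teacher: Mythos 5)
Your Direction A is correct and is exactly the paper's second mapping, and your Direction B follows the paper's first mapping as well (two-hub mass kept edgewise on $E^r$, single-hub mass $X^*_{rii}$ placed on the edge $e(i,r)$), so the overall architecture matches. The genuine problem is that Direction B is left unproved at precisely its only nontrivial point, and the two mechanisms you invoke to close it do not work. Property $(ii)$ of Lemma \ref{lemita} says that the \emph{directed} flow $X^*_{ri\bar{\jmath}}$ is zero; this is a statement about routing costs (the traversal $i\to\bar{\jmath}$ is never worth using) and it creates no slack in constraint \eqref{MCL:X-zi-zj} at the partner node $\bar{\jmath}$, which may be tight because of $\bar{\jmath}$'s own single-hub mass $X^*_{r\bar{\jmath}\bar{\jmath}}$ and two-hub flow, quantities about which $(ii)$ says nothing. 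Concretely, take an $HLP_{MA}$-feasible point in which commodity $r$ splits its single-hub mass, $X^*_{rii}=X^*_{r\bar{\jmath}\bar{\jmath}}=1/2$, with $z^*_i=z^*_{\bar{\jmath}}=1/2$ and $z^*_k=0$ elsewhere. Summing \eqref{const:route-edge+} over all nodes and using \eqref{const:route-all} shows that every LP-feasible point of $FZ-P$ satisfies $\sum_{k\in V} z_k\geq 2\sum_{e\in E}x_{re}=2$, because each edge is charged at both of its endpoints; the point above has $\sum_k z^*_k=1$. Hence \emph{no} placement of the single-hub masses on incident edges --- your single chosen edge $e(i,r)$, a split over several edges, or anything else --- can be feasible for $FZ-P$ with these $z^*$ values. "Slot availability" is simply not convertible into the bound $\sum_{e\in\delta(k)}x_{re}\leq z^*_k$ by any local rerouting argument.

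What this shows is that the equality cannot be established for an arbitrary optimal LP solution of $HLP_{MA}$ by cost bookkeeping plus Lemma \ref{lemita} alone: one must additionally argue that, under the standing assumption of Remark \ref{Remark1}, \emph{some} optimal LP solution of $HLP_{MA}$ carries enough hub capacity, suitably located, to absorb the partner-endpoint charges (in the split example above, an equally cheap integral single-hub solution exists, which is exactly the situation Remark \ref{Remark1} excludes --- that is where the assumption has to do real work). Your appeal to the two-hub assumption gestures in this direction but is never carried out, and note that the assumption constrains \emph{integer} optima while the inequality you need concerns \emph{LP} optima, so the transfer is itself a nontrivial step. To be fair, the paper's own proof waves through exactly this point, asserting that the constructed $(\overline{\mathbf z},\overline{\mathbf y},\overline{\mathbf x})$ satisfies \eqref{const:route-edge+} "by construction"; you have therefore correctly located the crux of the argument, but locating it is not proving it, and as written your proposal (like the paper's proof) leaves a genuine gap at this step.
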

			
			\begin{proof}
			We show that for every optimal solution to the LP relaxation of $HLP_{MA}$ there exists a feasible solution to the LP relaxation of $FZ-P$ with the same objective function and \textit{vice versa}.
			
			\begin{itemize}
			\item Let $(\overline{\mathbf{z}}, \overline{\mathbf{X}})$ be an optimal solution to the LP relaxation of $HLP_{MA}$. Consider now the solution $(\overline{\mathbf z}, \overline{\mathbf y}, \overline{\mathbf x})$ with the same $\overline{\mathbf{z}}$ components and $\overline y_e=\max\{ {\overline X_{rij}}+ { \overline X_{rji}}: r\in  R\}$ for all $e=ij\in E$. For, $r\in R$, $e\in E$, let also,
			\[
			{\overline x_{re}}=
			\begin{cases}
			{\overline X_{rij}} + { \overline X_{rji}} & \text{if } e\in E^r\\
			{\overline X_{rii}} & \text{if } i\in V^r,\, e=e(i,r)\\
			0 & \text{otherwise}.
			\end{cases}
			\]
			
			By construction, $(\overline{\mathbf z}, \overline{\mathbf y}, \overline{\mathbf x})$ is feasible for $FZ-P$ as it satisfies constraints \eqref{const:route-all}, \eqref{const:route-edge} and \eqref{const:route-edge+}. Moreover, its routing cost for a given commodity $r\in R$ is
			
			\[
			\sum_{e\in E}{\overline F_{re}}{\overline x_{re}}=\sum_{e\in E^r}{F_{re}}{\overline x_{re}}+\sum_{i\in V^r}{C_{rii}\overline x_{re(i,r)}}
			\]
			which, by definition of $\overline{\mathbf x}$, coincides with the objective function value of $(\overline{\mathbf{z}}, \overline{\mathbf{X}})$.
			
			\item Let $(\overline{\mathbf z}, \overline{\mathbf y}, \overline{\mathbf x})$ be an optimal solution to the LP relaxation of $FZ-P$. Without loss of generality, we assume that, ${\overline x_{re}}=0$  for all $r\in R$, $i\in V^r$,  $e\in\delta(i)\setminus E^r$, $e\ne e(i,r)$. Consider now the solution  $(\overline{\mathbf{z}}, \overline{\mathbf{X}})$ in which the $\overline{\mathbf z}$ components remain unchanged, and the components {$\overline{X}_{rij}$}, {$\overline{X}_{rji}$}, {$\overline{X}_{rii}$}, and {$\overline{X}_{rjj}$} are determined as follows:
			
			\[
			\begin{cases}
			{\overline X_{rij}}={\overline x_{re}},\, {\overline X_{rji}}={\overline X_{rii}}={\overline X_{rjj}}=0 & \text{if } e\in E^r,\, \text{ and }{\overline F_{re}}={C_{rij}},\\
			{\overline X_{rji}}={\overline x_{re}},\, {\overline X_{rij}}={\overline X_{rii}}={\overline X_{rjj}}=0 & \text{if } e\in E^r,\, \text{ and } {\overline F_{re}}={C_{rji}}<{C_{rij}},\\
			{\overline X_{rii}}={\overline x_{re}},\, {\overline X_{rij}}={\overline X_{rji}}={\overline X_{rjj}}=0 & \text{if } e=ij=e(i,r),\, \text{ with } i\in V^r,\\
			{\overline X_{rii}}={\overline X_{rij}}={\overline X_{rji}}={\overline X_{rjj}}=0 & \text{if } e=ij\notin E^r,\, e\ne e(i,r),\, \text{ with } i\in V^r,\\
			\end{cases}
			\]
			
			It is straightforward to check that $(\overline{\mathbf{z}}, \overline{\mathbf{X}})$ satisfies constraints \eqref{SK:route-all}, \eqref{MCL:X-zi-zj} and its objective function value coincides with that of $(\overline{\mathbf z}, \overline{\mathbf y}, \overline{\mathbf x})$. \hfill $\blacksquare$
			\end{itemize}
			\end{proof}

\medskip

	\section{A reinforced supermodular formulation}\label{sec:reinfore-super}
		Below we introduce an alternative formulation for the MA-HLP, which will be referred to as $FZ-S$. It takes as starting point the supermodular formulation of \citet{ContrerasFernandez14} (which will be denoted by $CF-S$) and uses exactly its same decision variables. The main advantage of $FZ-S$ over $CF-S$ is that the former produces the same LP bound as $FZ-P$ (and thus as $HLP_{MA}$), that is tighter than the LP bound of the later, which coincides with that of $CF-P$.
		
		The supermodular formulation $CF-S$ uses the same $\mathbf y$ and $\mathbf z$ variables as formulation $CF-P$, { in addition to} continuous variables $\eta^r\geq 0$ for $r\in  R$, which indicate the value of the optimal routing path for commodity $r$ in the hub network induced by $\mathbf{y}$ and $\mathbf{z}$. To guarantee that at least one interhub edge is activated, the edge set is enlarged to $E^*=E\cup\{\widetilde e\}$ where $\widetilde e$ is a fictitious edge {with an arbitrarily large routing cost} $\overline F_{\widetilde er}=M$ for all $r\in R$. 
		
		Furthermore, for each $r\in  R$, the values ${\overline F_{re}}$ are sorted by increasing order, with ties arbitrarily broken. We denote by {$e_{rt}\in E$} the edge producing the $t$-th sorted value. For simplicity, when the index $r$ is clear from the context we just write $e_t$ instead of {$e_{rt}$}. That is, ${\overline F_{re_1}}\leq {\overline F_{re_2}}\leq \dots\leq {\overline F_{re_{|E|}}}\leq {\overline F_{re_{|E|+1}}}=M$.
		
		Specifically, the supermodular formulation of \citet{ContrerasFernandez14} is:
		\begin{subequations}
			\begin{align}
				(CF-S)\qquad  \min & \sum_{i\in V}f_iz_i\, + \sum_{r\in  R}\eta^r && \label{of}\\
				\mbox{s.t. }
				& \eta^r\geq {\overline F_{re_{t}}}+\sum_{e\in E\setminus \{e_{t}\}}({\overline F_{re}}- {\overline F_{re_{t}}})^-y_{e_h}\, &&   r\in  R, {t=1,\dots,|E^*|}\label{const:valor_eta}\\ 
				& y_e\leq z_{i}\, &&     e=ij\in E^*\tag{\ref{const:y-z1}}\\
				& y_e\leq z_{j}\, &&     e=ij\in E^*\tag{\ref{const:y-z2}}\\
				& y_e\in\{0, 1\}, e\in E^*; z_{i}\in\{0, 1\},\, i\in V,\, \eta^r\geq 0,\, r\in  R,\label{domain}
			\end{align}
		\end{subequations}
		\noindent where $a^-=\min\{a, 0\}$.
		
		Constraints \eqref{const:valor_eta} have the following interpretation: the routing cost of a given commodity $r\in  R$ will be at least ${\overline F_{re_t}}$ unless some edge $e\in E^*$ with a { lower routing cost is activated}. Taking into account the sorting of the edges, constraints \eqref{const:valor_eta} can be alternatively written as
		\[
			\eta^r\geq {\overline F_{re_t}}+\sum_{h=1}^{t-1}({\overline F_{re_h}}- {\overline F_{re_{t}}})y_{e_h}\, \qquad   r\in  R, t=1,\dots, |E^*|,	
		\]
		\noindent since, for a given commodity $r\in R$ and routing value ${ \overline F_{re_t}}$, the only edges with   a routing value strictly smaller than ${ \overline F_{re_t}}$ are those sorted in position at most ${t-1}$, with $1\leq h \leq t-1$.
		
		In \citet{ContrerasFernandez14} it is proven that formulations $CF-P$ and $CF-S$ produce the same LP bound, which as explained, may not be very tight for the MA-HLP. This result is aligned with a previous one of \citet{Garcia2} for the special case of the $p$-median problem without hub activation costs, since the adaptation of formulation $CF-P$ to that special case coincides, \textit{sauf} minor modifications, with the radius formulation of \citet{Garcia2}. To the best of our knowledge, these are the only existing formulation for the MA-HLP using 1- and 2-index decision variables only.
		
		In the remainder of this section we turn our attention on how to translate the rationale of constraints \eqref{const:route-edge+} to the supermodular formulation $CF-S$, while retaining the same set of variables. For this, we will denote by ${v_{rt}}$ the $t$-th sorted routing value for commodity $r\in R$, i.e., ${v_{rt}}={\overline F_{re_t}}$, and re-write constraints \eqref{const:valor_eta} as
		\begin{subequations}
			\begin{align}
			&\eta^r\geq {v_{rt}}+\sum_{h=1}^{t-1}({\overline F_{re_h}}- {v_{rt}})y_{e_h}\, && r\in  R, t=1\dots, |E^*|.\label{const:valor_eta2}
			\end{align}
		\end{subequations}
		
		Consider a commodity $r\in R$ and indices $t, h$ with $2\leq t\leq |E^*|$ and $1\leq h \leq t-1$. Let us suppose that the $h$-th routing cost, ${v_{rh}}$, corresponds to a single-hub path through an intermediate vertex $i_h\in V^r$, i.e. ${v_{rh}}={H_{ri_h}}$. Then, $e\notin E^r$ for all $e\in\delta(i_h)$ such that ${\overline F_{re}}={v_{rh}}$, so all variables $y_e$ corresponding to such edges will have the same coefficient $\left({\overline F_{re}}-{v_{rt}}\right)<0$. In such a case, the activation of any interhub edge $e\in\delta(i_h)\setminus E^r$ with ${\overline F_{re}}={v_{rh}}$ will produce the same \textit{saving} $\left({\overline F_{re}}-{v_{rt}}\right)$. This means that  the specific edge that is activated is irrelevant, provided that ${\overline F_{re}}={v_{rh}}$. What is necessary for the feasibility of the solution is that node $i_h$ is activated as a hub.
		
		The above observation can be reflected in the right hand side of the constraints \eqref{const:valor_eta2} by  \textit{merging}, in one single term associated with decision variable $z_{i_h}$, all the addends associated with the value ${v_{rh}}$ corresponding to single-hub routing paths \textit{via} node $i_h$.
		
		For this, we will differentiate among the set of potential values for the \textit{best} routing of a given commodity $r\in R$ (relative to a given edge) according to $\{{F_{re}}\}_{e\in E^r}\cup \{{H_{ri}}\}_{i\in V^r}$. Let $\{{ \overline v_{rt}}\}_{t\in T^r}$, with $|T^r|=|E^r|+|V^r|$, denote the sequence of these values  sorted by increasing order (again ties are broken arbitrarily). Indeed, it is now necessary to control whether, for a given $t$, the value ${ \overline v_{rt}}$ corresponds to an interhub-edge path (with routing cost ${F_{re}}$, $e\in E^r$) or to a single-hub path (with routing cost ${H_{ri}}$, $i\in V^r$). For this reason, we partition $T^r=T^r_y\cup T^r_z$, where $T^r_y$ and $T^r_z$ respectively denote the sets of indices such that ${ \overline v_{rt}}$ corresponds to ${F_{re_t}}$ and to ${H_{ri_t}}$.
		
		Therefore, an alternative supermodular formulation for the MA-HLP is:
		\begin{subequations}
			\begin{align}
				(FZ-S)\qquad  \min & \sum_{i\in V}f_iz_i\, + \sum_{r\in  R}\eta^r && \tag{\ref{of}}\\
				\mbox{s.t. }
				&\eta^r\geq { \overline v_{rt}}+\sum_{h\in T^r_y: h\leq t-1}({F_{re_h}}- { \overline v_{rt}})y_{e_h}+\sum_{h\in T^r_z: h\leq t-1}({H_{ri_h}}- { \overline v_{rt}})z_{i_h}\, &&   r\in  R, t\in T^r\label{const_super-z-1}\\
				& y_e\leq z_{i}\, &&     e=ij\in E^*\tag{\ref{const:y-z1}}\\
				& y_e\leq z_{j}\, &&     e=ij\in E^*\tag{\ref{const:y-z2}}\\
				& y_e\in\{0, 1\}, e\in E^*; z_{i}\in\{0, 1\},\, i\in V,\, \eta^r\geq 0,\, r\in  R.\tag{\ref{domain}}
			\end{align}
		\end{subequations}
		
		To alleviate notation, in the following we will not specify whether the summation indices  belong to $T^r_y$ or to $T^r_z$ as this can be immediately deduced from the decision variables involved in the corresponding sum. Still, it is important to recall that all the terms involving some $\mathbf{y}$ variable  correspond to indices of $T^r_y$ (i.e. they are associated with edges of $E^r$) whereas all the terms involving some $\mathbf{z}$ variable correspond to  indices of $T^r_z$ (i.e. they are associated with nodes of $V^r$).
		
		We note that, for the special case of the $p$-median without hub activation costs, \citet{Garcia2} developed a family of valid inequalities, which can be interpreted quite similarly to \eqref{const_super-z-1}.
		
		We next analyze the LP bound produced by $FZ-S$.
		
		\begin{propo}
			Let $\overline{\mathbf{y}}\in [0, 1]$ and $\overline{\mathbf{z}}\in [0, 1]$ be optimal values for the LP relaxation of FZ-S. For $r\in R$, let $\overline t_r=\min\{t\in T^r: \sum_{h=1}^{t}\overline y_{e_h}+\sum_{h=1}^{t}\overline z_{i_h}\geq 1\}$. Then,
			\[
				\overline v_{FZS}=\sum_{i\in V}f_i\overline z_i + \sum_{r\in R}{v_{r\overline t_r}}\left(1-\sum_{h=1}^{\overline t_r-1}\overline y_{e_h}-\sum_{h=1}^{\overline t_r-1}\overline z_{i_h}\right) +\sum_{h=1}^{\overline t_r-1} {F_{re_h}}\overline  y_{e_h}+\sum_{h=1}^{\overline t_r-1}{H_{ri_h}}\overline z_{i_h}.
			\]
		\end{propo}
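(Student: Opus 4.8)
The plan is to reduce the whole statement to a single-commodity maximization. Once the optimal $\overline{\mathbf y},\overline{\mathbf z}$ are fixed, each variable $\eta^r$ is otherwise free and enters the objective with a positive coefficient, so at optimality it equals the largest of the lower bounds imposed by \eqref{const_super-z-1}. Writing $g_r(t)$ for the right-hand side of the constraint indexed by $t\in T^r$, this gives
\[
\overline v_{FZS}=\sum_{i\in V}f_i\overline z_i+\sum_{r\in R}\max_{t\in T^r}g_r(t).
\]
Thus it suffices to prove, for each $r$, that $\max_{t\in T^r}g_r(t)=g_r(\overline t_r)$ and that $g_r(\overline t_r)$ is precisely the bracketed expression in the claim.

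First I would put each bound in \emph{product form}. Collecting the coefficient of $\overline v_{rt}$ in \eqref{const_super-z-1} yields
\[
g_r(t)=\overline v_{rt}\Bigl(1-\sum_{h=1}^{t-1}\overline y_{e_h}-\sum_{h=1}^{t-1}\overline z_{i_h}\Bigr)+\sum_{h=1}^{t-1}F_{re_h}\overline y_{e_h}+\sum_{h=1}^{t-1}H_{ri_h}\overline z_{i_h},
\]
so that $g_r(\overline t_r)$ is literally the summand in the statement. The key structural observation is that, by construction of the refined sorted sequence $\{\overline v_{rt}\}_{t\in T^r}$, the value at position $t$ coincides with its own cost coefficient: $\overline v_{rt}=F_{re_t}$ for $t\in T^r_y$ and $\overline v_{rt}=H_{ri_t}$ for $t\in T^r_z$. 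Abbreviating by $p_h$ the activated variable at position $h$ (namely $\overline y_{e_h}$ if $h\in T^r_y$ and $\overline z_{i_h}$ if $h\in T^r_z$) and by $P_r(t)=\sum_{h=1}^{t}p_h$ the cumulative activation, this identity makes the accumulated-cost term $a_hp_h$ telescope against the $\overline v_{rt}$ term.

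The core computation is then the one-step increment. Using $1-P_r(t-1)=1-P_r(t)+p_t$ together with $\overline v_{rt}\,p_t=F_{re_t}\overline y_{e_t}$ (resp. $H_{ri_t}\overline z_{i_t}$) to cancel the freshly added cost term, I expect the clean identity
\[
g_r(t+1)-g_r(t)=\bigl(\overline v_{r,t+1}-\overline v_{rt}\bigr)\bigl(1-P_r(t)\bigr).
\]
Since the sequence is sorted nondecreasingly, $\overline v_{r,t+1}-\overline v_{rt}\ge 0$, so the sign of each increment is that of $1-P_r(t)$, and $P_r$ is nondecreasing because $\overline y,\overline z\ge 0$. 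By definition $\overline t_r$ is the first index with $P_r(\overline t_r)\ge 1$; hence $P_r(t)<1$ (increment $\ge 0$) for $t<\overline t_r$ and $P_r(t)\ge 1$ (increment $\le 0$) for $t\ge\overline t_r$. Therefore $g_r$ is nondecreasing up to $\overline t_r$ and nonincreasing thereafter, so its maximum is attained at $\overline t_r$. Summing over $r$ and restoring $\sum_{i\in V}f_i\overline z_i$ gives the closed form.

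The one genuinely delicate point is the well-definedness of $\overline t_r$, i.e. that the cumulative activation $P_r(t)$ actually reaches $1$ within $T^r$. I would settle this by appealing to the fictitious edge $\widetilde e$ of value $M$ inherited from $CF-S$: an optimal LP solution cannot afford the arbitrarily large marginal cost $M\bigl(1-P_r(\cdot)\bigr)$ that would otherwise be charged to $\eta^r$, so it must activate enough edges and hubs for $P_r$ to cross $1$ strictly before the fictitious position, which makes $\overline t_r$ well-defined and independent of $M$. I would also verify the telescoping cancellation at the two boundaries, the empty sum at $t=1$ and the index $t=\overline t_r$, to confirm that no stray coefficient survives and that $g_r(\overline t_r)$ matches the stated term exactly.
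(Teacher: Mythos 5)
Your proof is correct, and it differs from the paper's in how maximality at $\overline t_r$ is established. Both arguments start identically: at optimality $\eta^r$ equals the largest right-hand side of \eqref{const_super-z-1}, and that right-hand side is rewritten in the product form $g_r(t)=\overline v_{rt}\bigl(1-P_r(t-1)\bigr)+\sum_{h=1}^{t-1}F_{re_h}\overline y_{e_h}+\sum_{h=1}^{t-1}H_{ri_h}\overline z_{i_h}$, with $P_r(t)=\sum_{h=1}^{t}\overline y_{e_h}+\sum_{h=1}^{t}\overline z_{i_h}$. From there the paper proves $S_{r\overline t_r}\ge S_{rt}$ by two direct case comparisons ($1\le t<\overline t_r$ and $t>\overline t_r$), each time splitting the sums at the smaller index and sign-bounding the residual terms $(F_{re_h}-\overline v_{rt})\overline y_{e_h}$ and $(H_{ri_h}-\overline v_{rt})\overline z_{i_h}$. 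You instead exploit the identity $\overline v_{rh}=F_{re_h}$ for $h\in T^r_y$ (resp.\ $H_{ri_h}$ for $h\in T^r_z$) to derive the telescoping increment $g_r(t+1)-g_r(t)=(\overline v_{r,t+1}-\overline v_{rt})\bigl(1-P_r(t)\bigr)$, from which unimodality of $t\mapsto g_r(t)$ --- nondecreasing while $P_r(t)<1$, nonincreasing once $P_r(t)\ge 1$ --- and hence maximality at the first crossing index $\overline t_r$ are immediate. The ingredients (sortedness, $\overline{\mathbf y},\overline{\mathbf z}\ge 0$, the definition of $\overline t_r$) are the same in both proofs, but your single increment identity replaces the paper's two chains of inequalities and makes the sign structure transparent, at the price of having to check the boundary cases of the telescoping, which you do. You also settle a point the paper passes over silently: that $\overline t_r$ exists at all. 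Your justification via the fictitious edge $\widetilde e$ of cost $M$ is the right one --- an optimal LP solution must accumulate activation of at least $1$ strictly before the fictitious position, since otherwise its objective value would grow with $M$ while finite $M$-independent feasible solutions exist.
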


		\begin{proof}
			For $r\in R$, $t\in T^r$ given, let ${S_{rt}}$ denote value of the right-hand-side of the associated constraint \eqref{const_super-z-1} for the solution $\overline{\mathbf{y}}$, $\overline{\mathbf{z}}$. Then
			\begin{align*}
				&{S_{rt}}={ \overline v_{rt}}+\sum_{h=1}^{t-1}({ F_{re_h}}- { \overline v_{rt}})\overline y_{e_h}+\sum_{h=1}^{t-1}({ H_{ri_h}}- { \overline v_{rt}})\overline z_{i_h}=&&\\
				&= { \overline v_{rt}}\left(1-\sum_{h=1}^{t-1}\overline y_{e_h}-\sum_{h=1}^{t-1}\overline z_{i_h}\right)+ \sum_{h=1}^{t-1}{ F_{re_h}} \overline y_{e_h}+ \sum_{h=1}^{t-1}{ H_{ri_h}} \overline z_{i_h}.
			\end{align*}
			
			Let us see that ${S_{r\overline t_r}}\geq { S_{rt}}$ both for $1\leq t < \overline t_r$ and for $\overline t_r <t$. Indeed:
			\begin{itemize}
			\item ${S_{r\overline t_r}}\geq {S_{rt}}$ for $1\leq t < \overline t_r$.
			
				\begin{align*}
					&{S_{r\overline t_r}}={v_{r\overline t_r}}\left(1-\sum_{h=1}^{\overline t_r-1}\overline y_{e_h}-\sum_{h=1}^{\overline t_r-1}\overline z_{i_h}\right)+ \sum_{h=1}^{\overline t_r-1}{ F_{re_h}} \overline y_{e_h}+ \sum_{h=1}^{\overline t_r-1}{ H_{ri_h}} \overline z_{i_h}\geq &&\\
					&{ \overline v_{rt}}\left(1-\sum_{h=1}^{\overline t_r-1}\overline y_{e_h}-\sum_{h=1}^{\overline t_r-1}\overline z_{i_h}\right)+ \sum_{h=1}^{\overline t_r-1}{ F_{re_h}} \overline y_{e_h}+ \sum_{h=1}^{\overline t_r-1}{ H_{ri_h}} \overline z_{i_h}=&&\\
					&{ \overline v_{rt}}\left(1-\sum_{h=1}^{t-1}{\overline y_{e_h}}-\sum_{h=t}^{\overline t_r-1}{\overline y_{e_h}}-\sum_{h=1}^{t-1}\overline z_{i_h}-\sum_{h=t}^{\overline t_r-1}\overline z_{i_h}\right)+&&\\ &\qquad\qquad \sum_{h=1}^{t-1}{ F_{re_h}} \overline y_{e_h}+ \sum_{h=t}^{\overline t_r-1}{ F_{re_h}} \overline y_{e_h}+ \sum_{h=1}^{t-1}{ H_{ri_h}} \overline z_{i_h}+ \sum_{h=t}^{\overline t_r-1}{ H_{ri_h}} \overline z_{i_h}=&&\\
					&={S_{rt}} +\sum_{h=t}^{\overline t_r-1}\left({ F_{re_h}}-{ \overline v_{rt}}\right)\overline y_{e_h}+\sum_{h=t}^{\overline t_r-1}\left({ H_{ri_h}}-{ \overline v_{rt}}\right)\overline z_{i_h}\,\geq {S_{rt}}.
				\end{align*}
				
				\noindent The first inequality holds since ${v_{r\overline t_r}}\geq { \overline v_{rt}}$ and, by the definition of $\overline t_r$, $1-\sum_{h=1}^{\overline t_r-1}\overline y_{e_h}-\sum_{h=1}^{\overline t_r-1}\overline z_{i_h}\geq 0$. The second inequality follows since, for all $h\geq t$, ${ F_{re_h}}-{ \overline v_{rt}}\geq 0$ and ${ H_{ri_h}}-{ \overline v_{rt}}\geq 0$, and also $\overline y_{e_h}\geq 0$ and $\overline z_{i_h}\geq 0$.
			\item ${S_{r\overline t_r}}\geq {S_{rt}}$ for $\overline t_r <t$.
			
				\begin{align*}
					&{S_{rt}}={ \overline v_{rt}}\left(1-\sum_{h=1}^{t-1}\overline y_{e_h}-\sum_{h=1}^{t-1}\overline z_{i_h}\right)+ \sum_{h=1}^{t-1}{ F_{re_h}} \overline y_{e_h}+ \sum_{h=1}^{t-1}{ H_{ri_h}} \overline z_{i_h} = &&\\
					&= { \overline v_{rt}}\left(1-\sum_{h=1}^{\overline t_r-1}\overline y_{e_h}-\sum_{h=1}^{\overline t_r-1}\overline z_{i_h}\right)-{ \overline v_{rt}}\left(\sum_{h=\overline t_r}^{t-1}\overline y_{e_h}+\sum_{h=\overline t_r}^{t-1}\overline z_{i_h}\right)+\\
					&\qquad\qquad + \left[\sum_{h=1}^{\overline t_r-1}{ F_{re_h}} \overline y_{e_h}+\sum_{h=1}^{\overline t_r-1}{ H_{ri_h}} \overline z_{i_h}\right]+ \left[\sum_{h=\overline t_r}^{t-1}{ F_{re_h}} \overline y_{e_h}+ \sum_{h=\overline t_r}^{t-1}{ H_{ri_h}} \overline z_{i_h}\right]\leq &&\\
					&\leq {v_{r\overline t_r}}\left(1-\sum_{h=1}^{\overline t_r-1}\overline y_{e_h}-\sum_{h=1}^{\overline t_r-1}\overline z_{i_h}\right)-{ \overline v_{rt}}\left(\sum_{h=\overline t_r}^{t-1}\overline y_{e_h}+\sum_{h=\overline t_r}^{t-1}\overline z_{i_h}\right)+\\
					&\qquad\qquad + \left[\sum_{h=1}^{\overline t_r-1}{ F_{re_h}} \overline y_{e_h}+ \sum_{h=1}^{\overline t_r-1}{ H_{ri_h}} \overline z_{i_h}\right]+ \left[\sum_{h=\overline t_r}^{t-1}{ F_{re_h}} \overline y_{e_h}+ \sum_{h=\overline t_r}^{t-1}{ H_{ri_h}} \overline z_{i_h}\right]= &&\\
					= &{S_{r\overline t_r}}+ \sum_{h=\overline t_r}^{t-1}\left({ F_{re_h}}-{ \overline v_{rt}}\right)\overline y_{e_h}+\sum_{h=\overline t_r}^{t-1}\left({ H_{ri_h}}-{ \overline v_{rt}}\right)\overline z_{i_h}\leq {S_{r\overline t_r}}.
				\end{align*}
				
				\noindent The first inequality holds since $t>t_r$ and, by the definition of $\overline t_r$, it holds that ${v_{r\overline t_r}}\leq { \overline v_{rt}}$ and $1-\sum_{h=1}^{t-1}\overline y_{e_h}-\sum_{h=1}^{t-1}\overline z_{i_h}\leq 0$. The second inequality follows  from ${ F_{re_h}}-{ \overline v_{rt}}\leq 0$ and ${ H_{ri_h}}-{ \overline v_{rt}}\leq 0$ for all $h< t$, because $\overline y_{e_h}\geq 0$ and $\overline z_{i_h}\geq 0$.
				\end{itemize}
				
				Therefore, for a given $r\in R$, the largest right-hand side of the constraints \eqref{const_super-z-1} is attained for the index $\overline t_r$ as defined above. Hence, the objective function value of the LP of $FZ-S$ is:
				\begin{align*}
					&\overline v_{FZS}= \sum_{i\in V}f_i\overline z_i+\sum_{r\in R}{S_{r\overline t_r}}=&&\\
					&=\sum_{i\in V}f_i\overline z_i+\sum_{r\in R}{v_{r\overline t_r}}\left(1-\sum_{h=1}^{\overline t_r-1}\overline y_{e_h}-\sum_{h=1}^{\overline t_r-1}\overline z_{i_h}\right)+\sum_{h=1}^{\overline t_r-1} { F_{re_h}}\overline y_{e_h}+ \sum_{h=1}^{\overline t_r-1}{ H_{ri_h}}\overline z_{i_h}
				\end{align*}
				\noindent and the result follows. \hfill $\blacksquare$
			\end{proof}
		
		\subsection{Relating $FZ-S$ and $FZ-P$}
			Observe that, even if formulation $FZ-S$ does not include explicit variables representing the routing of the commodities, optimal routing paths can be immediately derived from optimal values for variables $\mathbf{z}$ and $\mathbf{y}$. In particular:
			
			\begin{lema}\label{lema2}
				Let ${\mathbf{z^*}}$ and ${\mathbf{y}^*}$ optimal values for variables $\mathbf{z}$ and $\mathbf{y}$ for $FZ-S$. 	For all $r\in R$, let also $t^*_r=\min\{\min\{t\in T^r_y:\, \sum_{h=1}^{t}y^*_{e_h}= 1\},\, \min\{t\in T^r_z:\, \sum_{h=1}^{t} z^*_{i_h}= 1\}\}$. Then, for all $r\in R$, optimal values
				for the routing variables of $FZ-P$ are given by:
				\[
					{x^{*}_{re}}=
					\begin{cases}
						1 & \text{ with } e=e_{t^*_r}, \text{if } t^*_r\in T^r_y, \\
						1 & \text{ with } e=e(i, r),\, i=i_{t^*_r},\,  \text{if } t^*_r\in T^r_z,\\
						0 & \text{otherwise}.
					\end{cases}
				\]
			\end{lema}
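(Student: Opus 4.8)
The plan is to lean on the preceding Proposition, which already identifies the optimal value of each $\eta^r$ as $S_{r\overline{t}_r}$, and to specialize it to a binary optimal solution $(\mathbf{z}^*, \mathbf{y}^*)$. First I would observe that for such a solution the index $\overline{t}_r$ of that Proposition coincides with $t^*_r$. Indeed, since all $y^*_{e_h}, z^*_{i_h}\in\{0,1\}$, the combined cumulative sum $\sum_{h\le t}(y^*_{e_h}+z^*_{i_h})$ first reaches $1$ exactly at the first position at which either an edge or a node is activated, which is precisely $t^*_r=\min\{\min\{t\in T^r_y:\sum_{h\le t}y^*_{e_h}=1\},\,\min\{t\in T^r_z:\sum_{h\le t}z^*_{i_h}=1\}\}$. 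Because every $y^*_{e_h}$ and $z^*_{i_h}$ with $h<t^*_r$ vanishes, substituting $\overline{t}_r=t^*_r$ into the formula for $S_{r\overline{t}_r}$ collapses all correction terms and gives the optimal value $\eta^r=\overline{v}_{r,t^*_r}$.

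Next I would read $\overline{v}_{r,t^*_r}$ as a routing cost and define $\mathbf{x}^*$ to match the statement. If $t^*_r\in T^r_y$ then $\overline{v}_{r,t^*_r}=F_{re_{t^*_r}}=\overline{F}_{r,e_{t^*_r}}$ (as $e_{t^*_r}\in E^r$), so I set $x^*_{re}=1$ for $e=e_{t^*_r}$; if $t^*_r\in T^r_z$ then $\overline{v}_{r,t^*_r}=H_{ri_{t^*_r}}$ is the cost of the single-hub path through $i=i_{t^*_r}$, encoded in $FZ\!-\!P$ by the representative edge $e(i,r)$, so I set $x^*_{re}=1$ for $e=e(i,r)$; all remaining $x^*_{re}$ are $0$. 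Feasibility of \eqref{const:route-all} is immediate since exactly one unit is routed. In the edge case, $e_{t^*_r}$ is active by the definition of $t^*_r$, so \eqref{const:route-edge} holds, and $y^*_{e_{t^*_r}}=1$ forces $z^*_i=z^*_j=1$ through \eqref{const:y-z1}--\eqref{const:y-z2}, so \eqref{const:route-edge+} is satisfied at both endpoints and trivially elsewhere; the cost contributed is $\overline{F}_{r,e_{t^*_r}}=\overline{v}_{r,t^*_r}=\eta^r$.

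The delicate point, and the step I expect to be the main obstacle, is the single-hub case $t^*_r\in T^r_z$, where the representative edge $e(i,r)=i\bar{\jmath}$ must simultaneously be active (so that \eqref{const:route-edge} holds), have both endpoints active as hubs (so that \eqref{const:route-edge+} holds at $\bar{\jmath}$ as well as at $i$), and carry cost $\overline{F}_{r,e(i,r)}=H_{ri}$. I would resolve this by exploiting that interhub edges incur no activation cost: an optimal $FZ\!-\!S$ solution activating at least two hubs (the standing assumption of Remark \ref{Remark1}) may be taken to activate every edge joining two active hubs without changing the objective, which lets me take the representative edge to join $i=i_{t^*_r}$ to some other active hub $j$, giving $y^*_{ij}=z^*_i=z^*_j=1$ at once. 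For the cost identity I would use that $i$ is the first active index in the sorted order: this forces $H_{rj}\ge H_{ri}$ for every other active hub $j\in V^r$, and it prevents any active edge incident to $i$ from belonging to $E^r$ with a strictly smaller value (such an edge would occupy an earlier position and contradict the minimality of $t^*_r$). Hence $ij\notin E^r$ and $\overline{F}_{r,ij}=\min\{F_{r,ij},H_{ri},H_{rj}\}=H_{ri}=\overline{v}_{r,t^*_r}$.

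Finally I would assemble the pieces. Summing over commodities, the $FZ\!-\!P$ objective of $(\mathbf{z}^*,\mathbf{y}^*,\mathbf{x}^*)$ equals $\sum_{i\in V}f_iz^*_i+\sum_{r\in R}\overline{v}_{r,t^*_r}=\sum_{i\in V}f_iz^*_i+\sum_{r\in R}\eta^r$, which is the optimal value of $FZ\!-\!S$. Since $FZ\!-\!S$ and $FZ\!-\!P$ are both exact formulations of the MA-HLP under the two-hub assumption and therefore share the same optimal value, the feasible solution $\mathbf{x}^*$ attains the $FZ\!-\!P$ optimum, establishing that the displayed values are optimal routing variables.
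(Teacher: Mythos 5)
Your proposal follows the same skeleton as the paper's proof, which is a terse value-matching argument: the optimal $\eta^{r*}$ of $FZ-S$ equals $F_{re_{t^*_r}}$ or $H_{ri_{t^*_r}}$ according to whether $t^*_r\in T^r_y$ or $t^*_r\in T^r_z$, the proposed $\mathbf{x}^*$ routes each commodity at exactly that cost, hence $(\mathbf{z}^*,\mathbf{y}^*,\mathbf{x}^*)$ is optimal for $FZ-P$. What you add is real content that the paper leaves implicit: you derive $\eta^{r*}=\overline v_{r,t^*_r}$ by specializing the preceding Proposition to binary solutions (correctly noting $\overline t_r=t^*_r$ there), you verify feasibility constraint by constraint, and, most importantly, you identify the genuine subtlety the paper's two-line proof skips. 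In the single-hub case the representative edge $e(i,r)=i\bar{\jmath}$ is fixed \emph{a priori}, and $\bar{\jmath}$ need not be an activated hub; then $FZ-S$ forces $y^*_{e(i,r)}\leq z^*_{\bar{\jmath}}=0$, so setting $x^*_{re(i,r)}=1$ violates \eqref{const:route-edge}, and \eqref{const:route-edge+} fails at $\bar{\jmath}$ as well. Your repair --- pass WLOG to an optimal $FZ-S$ solution activating every edge between active hubs (activation is free and only relaxes \eqref{const_super-z-1}), then choose the representative edge to join $i=i_{t^*_r}$ to another active hub $j$, which exists by the standing assumption of Remark \ref{Remark1} --- is exactly what makes the construction feasible, and your check that this choice is legitimate is sound: minimality of $t^*_r$ forces $ij\notin E^r$, hence $C_{rij}\geq F_{r,ij}\geq H_{ri}$, so $ij$ indeed qualifies as an edge not traversed from $i$.

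One corner of your cost identity is not closed. You establish $H_{rj}\geq H_{ri}$ only for active hubs $j\in V^r$; if the other active hub lies in $U^r$, then $H_{rj}$ does not appear in the sorted list $\{\overline v_{rt}\}_{t\in T^r}$ and your positional argument says nothing about it. The identity still holds, but it needs the argument of Lemma \ref{lemita}$(iii)$: for $j\in U^r$ one has $H_{rj}>C_{rji}\geq F_{r,ij}$, so $ij\notin E^r$ (which you already have) yields $F_{r,ij}\geq\min\{H_{ri},H_{rj}\}$, and the minimum cannot be $H_{rj}$, whence $\min\{H_{ri},H_{rj}\}=H_{ri}$ and $\overline F_{r,ij}=H_{ri}$ as desired. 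Since $|U^r|\leq 1$, this can only occur when exactly two hubs are active and the second is the unique node of $U^r$ --- a corner case, but one your argument as written does not cover. Finally, your closing appeal to $FZ-P$ and $FZ-S$ being exact formulations with a common optimal value is the same unstated fact the paper's own conclusion rests on (value matching gives optimality only once one knows the $FZ-P$ optimum cannot lie below the $FZ-S$ optimum), so you are no worse off than the paper, but it is worth recognizing that this step carries real weight.
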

			
			\begin{proof}
				Let ${\eta^r}^*$, $r\in R$ denote the optimal values of the $\mathbf{\eta}$ variables associated with ${\mathbf{z^*}}$ and ${\mathbf{y}^*}$. Then,
				\[
					\sum_{r\in R}{\eta^r}^*=\sum_{r\in R: t^*_r\in T^r_y} {F_{re_{t^*_r}}} + \sum_{r\in R: t^*_r\in T^r_z}{H_{ri_{t^*_r}}}= \sum_{r\in R}\sum_{e\in E}{\overline F_{re}}{x^{*}_{re}},
				\]
				so the solution $\mathbf{x}^*$ as defined above, together with ${\mathbf{z^*}}$ and ${\mathbf{y}^*}$ is optimal for $FZ-P$. \hfill $\blacksquare$
			\end{proof}
			
			As we see below, the sorting and indexing used for the routing costs in formulation $FZ-S$, can also be exploited to obtain optimal values for the routing variables of the LP relaxation of $FZ-P$, from optimal values of variables $\mathbf{z}$ and $\mathbf{y}$ for the LP relaxation of $FZ-S$.
			
			\begin{lema}\label{lema3}
				Let $\overline{\mathbf{z}}$ and $\overline{\mathbf{y}}$ be optimal values for variables $\mathbf{z}$ and $\mathbf{y}$ for the LP relaxation of $FZ-P$. For all $r\in R$, let also $\overline t_r=\min\{t\in T^r: \sum_{h=1}^{t}\overline y_{e_h}+\sum_{h=1}^{t}\overline z_{i_h}\geq 1\}$. Then, the components of an optimal solution $\overline{\mathbf{x}}$ for the LP relaxation of $FZ-P$ is given by:
				
				\begin{equation}
					{\overline x_{re}}=
					\begin{cases}
						\overline y_{e} & \text{if } e={e_{h}},\, h\in T^r_y,\, h<\overline t_r\\
						\overline z_{i} & \text{if } e=e(i,r),\, i=i_h\in V^r,\,  h\in T^r_z,\, h<\overline t_r\\
						1-\sum_{h=1}^{t_r-1} {\overline y_{e_h}}-\sum_{h=1}^{t_r-1}\overline z_{i_h}& \text{if } e={e_{\overline t_r}}\\
						0 & \text{otherwise}.
					\end{cases}\label{sol-LP}
				\end{equation}
			\end{lema}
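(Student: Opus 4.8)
The plan is to take the given optimal values $\overline{\mathbf{z}},\overline{\mathbf{y}}$ of the LP relaxation of $FZ$-$S$, attach the routing vector $\overline{\mathbf{x}}$ defined by \eqref{sol-LP}, and show two things: that $(\overline{\mathbf{z}},\overline{\mathbf{y}},\overline{\mathbf{x}})$ is feasible for the LP relaxation of $FZ$-$P$, and that its objective value equals the quantity $\overline v_{FZS}$ computed in the previous Proposition. Since $\overline v_{FZS}$ is the optimum of the LP relaxation of $FZ$-$S$ and this value also lower-bounds the optimum of the LP relaxation of $FZ$-$P$ (the supermodular constraints \eqref{const_super-z-1} being valid lower bounds on the routing cost $\sum_{e}\overline F_{re}x_{re}$ of any $FZ$-$P$ solution), attaining $\overline v_{FZS}$ certifies that $\overline{\mathbf{x}}$ is optimal.

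First I would verify the routing equalities \eqref{const:route-all}. By a direct cancellation, the two interior cases of \eqref{sol-LP} contribute $\sum_{h\in T^r_y,\,h<\overline t_r}\overline y_{e_h}$ and $\sum_{h\in T^r_z,\,h<\overline t_r}\overline z_{i_h}$, which are exactly the amounts subtracted inside the boundary term $1-\sum_{h=1}^{\overline t_r-1}\overline y_{e_h}-\sum_{h=1}^{\overline t_r-1}\overline z_{i_h}$, so the total is $1$. Non-negativity of the boundary entry $\overline x_{re_{\overline t_r}}$ follows from the very definition of $\overline t_r$ as the least index whose cumulative sum reaches $1$, which forces the cumulative sum over $h\le\overline t_r-1$ to stay strictly below $1$. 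The edge-capacity constraints \eqref{const:route-edge} are immediate for the $y$-type interior entries, where $\overline x_{re_h}=\overline y_{e_h}$ holds with equality, and for the boundary entry when $\overline t_r\in T^r_y$, since the definition of $\overline t_r$ gives $\overline y_{e_{\overline t_r}}\ge 1-\sum_{h<\overline t_r}(\overline y_{e_h}+\overline z_{i_h})=\overline x_{re_{\overline t_r}}$.

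The step I expect to be the main obstacle is reconciling the single-hub ($z$-type) entries $\overline x_{r,e(i,r)}=\overline z_{i}$ with both \eqref{const:route-edge} and \eqref{const:route-edge+}. The value $\overline z_i$ is naturally controlled by \eqref{const:route-edge+}, provided no other flow of commodity $r$ is routed on edges incident to $i$, so that $\sum_{e\in\delta(i)}\overline x_{re}=\overline z_i$; here the a priori choice of $e(i,r)\in\delta(i)$ and the normalization convention (as used in the proof relating $FZ$-$P$ and $HLP_{MA}$) that $\overline x_{re}=0$ for the superfluous incident edges must be invoked. Constraint \eqref{const:route-edge} is the delicate one, since it demands $\overline z_i\le\overline y_{e(i,r)}$ while $FZ$-$S$ only supplies $\overline y_{e(i,r)}\le\overline z_i$. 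I would resolve this by first arguing that an optimal LP solution of $FZ$-$S$ may be taken with $\overline y_e=\min\{\overline z_i,\overline z_j\}$ for every $e=ij$ (the $\mathbf y$ variables appear with non-positive coefficients in \eqref{const_super-z-1} and carry no objective cost, so raising them preserves optimality), and then selecting $e(i,r)$ whose second endpoint carries a $\overline z$-value at least $\overline z_i$; the same reasoning handles the boundary entry when $\overline t_r\in T^r_z$, where $\overline x$ is governed by $\overline z_{i_{\overline t_r}}$.

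Finally I would compute the objective. Using $\overline F_{re_h}=F_{re_h}$ for $h\in T^r_y$ (as $e_h\in E^r$), $\overline F_{r,e(i_h,r)}=H_{ri_h}$ for $h\in T^r_z$, and $\overline F_{re_{\overline t_r}}=v_{r\overline t_r}$, the routing cost of commodity $r$ becomes
\[
\sum_{e\in E}\overline F_{re}\overline x_{re}=\sum_{h=1}^{\overline t_r-1}F_{re_h}\overline y_{e_h}+\sum_{h=1}^{\overline t_r-1}H_{ri_h}\overline z_{i_h}+v_{r\overline t_r}\Bigl(1-\sum_{h=1}^{\overline t_r-1}\overline y_{e_h}-\sum_{h=1}^{\overline t_r-1}\overline z_{i_h}\Bigr),
\]
which is precisely the term $S_{r\overline t_r}$ identified in the previous Proposition. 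Summing over $r\in R$ and adding $\sum_{i\in V}f_i\overline z_i$ reproduces $\overline v_{FZS}$, so $\overline{\mathbf{x}}$ is feasible and attains the value that, by the argument above, is optimal for the LP relaxation of $FZ$-$P$.
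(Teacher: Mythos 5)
Your checks of \eqref{const:route-all}, of the nonnegativity of the boundary entry, and of \eqref{const:route-edge} for the $y$-type entries are fine, and the structure of your optimality argument (show the value equals $\overline v_{FZS}$, then invoke that the LP bound of $FZ-S$ lower-bounds that of $FZ-P$) is sound, although the parenthetical validity claim it rests on would itself need the grouping argument that bounds the single-hub flow through a node $i$ by $z_i$. The genuine gap sits exactly at the step you flagged: feasibility of \eqref{const:route-edge+}. The condition ``no other flow of commodity $r$ is routed on edges incident to $i$'' cannot be arranged by conventions. The normalization you invoke only zeroes edges of $\delta(i)\setminus E^r$ other than $e(i,r)$; but \eqref{sol-LP} itself assigns flow $\overline y_{e_{h'}}$ to every edge $e_{h'}\in E^r\cap\delta(i)$ with $h'\in T^r_y$, $h'<\overline t_r$. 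Any such edge satisfies $F_{re_{h'}}<\min\{H_{ri},H_{rj}\}\leq H_{ri}$, hence precedes $i$ in the sorted list; so whenever the single-hub term of $i$ lies in the prefix (as an interior term or as the boundary term), those edges lie in the prefix too, and $\sum_{e\in\delta(i)}\overline x_{re}\geq \overline y_{e_{h'}}+\overline z_i>\overline z_i$ as soon as one $\overline y_{e_{h'}}>0$. Nothing in the optimality of $(\overline{\mathbf z},\overline{\mathbf y})$ for the LP of $FZ-S$ excludes this configuration. Your patch for \eqref{const:route-edge} on the $z$-type entries is also incomplete for a similar reason: $e(i,r)$ must be chosen among edges whose traversal from $i$ is not better than the single-hub path through $i$, and none of these candidates need have other endpoint $j$ with $\overline z_j\geq\overline z_i$.

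Moreover, this gap cannot be closed by a smarter assignment, because the statement itself fails on instances satisfying the paper's standing assumptions. Take $V=\{1,2,3\}$, $c_{ij}=1$ for $i\neq j$, $f_i=1$, $\alpha=0.5$, $\gamma=\theta=1$, and unit commodities $(1,2)$, $(1,3)$, $(2,3)$; the MA-HLP optimum is $4.5$ and activates two hubs. For each $r$, the edge $o^rd^r$ is the unique element of $E^r$, with $F_{r,o^rd^r}=0.5$ and $H_{ro^r}=H_{rd^r}=1$, while the two remaining edges have $\overline F_{re}=1$. The point $\overline z_i=\overline y_e=\tfrac13$ for all $i,e$, with $\eta^r=\tfrac56$, is feasible for the LP relaxation of $FZ-S$: every right-hand side of \eqref{const_super-z-1} is at most $\tfrac56$ (e.g., at the level of $H_{ro^r}$ it equals $1+(0.5-1)\cdot\tfrac13=\tfrac56$), so the $FZ-S$ LP bound is at most $1+3\cdot\tfrac56=3.5$. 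In $FZ-P$, however, adding \eqref{const:route-edge+} at $o^r$ and $d^r$ and using \eqref{const:route-all} gives $\overline x_{r,o^rd^r}\leq z_{o^r}+z_{d^r}-1$, hence the routing cost of $r$ is at least $1.5-0.5(z_{o^r}+z_{d^r})$, and the objective of any feasible fractional solution is at least $4.5$. The two LP bounds therefore differ ($3.5$ versus $4.5$), so no vector $\overline{\mathbf x}$ whatsoever --- in particular not \eqref{sol-LP}, which here puts flow $\tfrac13$ on each of $o^rd^r$, $e(o^r,r)$ and $e(d^r,r)$, loading $\tfrac23>\overline z_{o^r}$ onto $\delta(o^r)$ --- can complete an $FZ-S$-optimal $(\overline{\mathbf z},\overline{\mathbf y})$ into a feasible $FZ-P$ solution of the same value. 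You should be aware that the paper's own proof glosses over exactly the same point (it asserts that using a single edge per single-hub path ``guarantees'' \eqref{const:route-edge+}), so the obstacle you identified is real; but neither your argument nor the paper's resolves it.
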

			
			\begin{proof}
				Observe that the solution $\overline{\mathbf{x}}$ is built over the support graph induced by the (fractional) solution $\overline{\mathbf{z}}$ and $\overline{\mathbf{y}}$, where potential routing paths can be identified. The indices $h\in T$ of the activated decision variables allow to determine, whether for a given $r\in R$, such paths involve edges in $E^r$ or just one intermediate hub. In the later case ($h\in T^r_z$) a single edge $e(i_h,r)$ incident with the involved node is associated with the single-hub path. This guarantees that \eqref{const:route-edge+} are satisfied. Moreover, for each $r\in R$, the values ${\overline X_{re}}$ are assigned in a greedy fashion by increasing values of their routing paths, until the corresponding constraint \eqref{const:route-all} is satisfied, so the total routing cost of the obtained routing paths are of minimum total cost. \hfill $\blacksquare$
			\end{proof}
			
			\begin{coro}
				$FZ-P$ and $FZ-$S produce the same LP bound.
			\end{coro}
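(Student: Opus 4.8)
The plan is to prove the two inequalities $\overline v_{FZP}\le \overline v_{FZS}$ and $\overline v_{FZS}\le \overline v_{FZP}$ separately, where $\overline v_{FZP}$ and $\overline v_{FZS}$ denote the optimal values of the LP relaxations of $FZ-P$ and $FZ-S$. The key structural observation I would exploit is that both formulations share the variables $\mathbf{z}$ and $\mathbf{y}$, the objective term $\sum_{i\in V}f_i z_i$, and constraints \eqref{const:y-z1}--\eqref{const:y-z2}; they differ only in how the routing cost of each commodity is encoded, through $\mathbf{x}$ in $FZ-P$ and through $\boldsymbol\eta$ in $FZ-S$. Hence it is enough to compare, for a fixed feasible $(\overline{\mathbf z},\overline{\mathbf y})$, the minimum routing contribution each formulation charges to each commodity $r\in R$.

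For $\overline v_{FZP}\le \overline v_{FZS}$ I would start from an optimal solution of the LP relaxation of $FZ-S$, whose value $\overline v_{FZS}$ is given in closed form by the preceding Proposition. Applying Lemma \ref{lema3} to its $(\overline{\mathbf z},\overline{\mathbf y})$ components produces the vector $\overline{\mathbf x}$ of \eqref{sol-LP}, which is feasible for the LP relaxation of $FZ-P$. It then remains to substitute \eqref{sol-LP} into the routing term $\sum_{e\in E}\overline F_{re}\overline x_{re}$ and check that, for each $r\in R$, it telescopes to $v_{r\overline t_r}(1-\sum_{h\le \overline t_r-1}\overline y_{e_h}-\sum_{h\le \overline t_r-1}\overline z_{i_h})+\sum_{h\le \overline t_r-1}F_{re_h}\overline y_{e_h}+\sum_{h\le \overline t_r-1}H_{ri_h}\overline z_{i_h}$, using $\overline F_{re_h}=F_{re_h}$ for $h\in T^r_y$ and $\overline F_{r,e(i_h,r)}=H_{ri_h}$ for $h\in T^r_z$. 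This is exactly the per-commodity term of $\overline v_{FZS}$ in the Proposition, so summing over $r$ and adding $\sum_{i\in V}f_i\overline z_i$ exhibits a feasible $FZ-P$ point of value $\overline v_{FZS}$, yielding $\overline v_{FZP}\le \overline v_{FZS}$.

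For the reverse inequality I would take an optimal solution $(\overline{\mathbf z},\overline{\mathbf y},\overline{\mathbf x})$ of the LP relaxation of $FZ-P$, keep $(\overline{\mathbf z},\overline{\mathbf y})$, and set $\eta^r:=\sum_{e\in E}\overline F_{re}\overline x_{re}$ for each $r\in R$. The point $(\overline{\mathbf z},\overline{\mathbf y},\boldsymbol\eta)$ has $FZ-S$ objective value equal to $\overline v_{FZP}$, so the only thing to verify is that these $\eta^r$ satisfy every constraint \eqref{const_super-z-1}. By the Proposition it suffices to check $\eta^r\ge S_{r\overline t_r}$, since $S_{r\overline t_r}=\max_{t}S_{rt}$; equivalently, that no $FZ-P$-feasible routing of commodity $r$ costs less than $S_{r\overline t_r}$. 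Writing $\sum_{e}\overline F_{re}\overline x_{re}-S_{rt}=\sum_{e}(\overline F_{re}-\overline v_{rt})\overline x_{re}-\sum_{h<t}(F_{re_h}-\overline v_{rt})\overline y_{e_h}-\sum_{h<t}(H_{ri_h}-\overline v_{rt})\overline z_{i_h}$ and bounding term by term, the edges with $h\in T^r_y$, $h<t$ are controlled by $\overline x_{re}\le \overline y_e$, while the single-hub values with $h\in T^r_z$, $h<t$ are controlled by the merged constraint $\sum_{e\in\delta(i_h)}\overline x_{re}\le \overline z_{i_h}$, all the relevant coefficients being nonpositive. This gives $\overline v_{FZS}\le \overline v_{FZP}$, and combined with the first part, the equality $\overline v_{FZP}=\overline v_{FZS}$.

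I expect the main obstacle to be the reverse direction: proving that \eqref{const_super-z-1} is a valid lower bound on every feasible $FZ-P$ routing cost, i.e. that the greedy cost-sorted assignment underlying $S_{r\overline t_r}$ is genuinely optimal for the per-commodity routing LP. The delicate point is the treatment of the single-hub values, since several edges incident to a common node $i_h$ may share the value $H_{ri_h}$; it is precisely the merged capacity constraint \eqref{const:route-edge+}, rather than the individual bounds $\overline x_{re}\le \overline y_e$, that certifies the bound, which is exactly the reinforcement separating $FZ-S$ from $CF-S$. A secondary point requiring care is confirming, in the forward direction, that the $\overline{\mathbf x}$ of \eqref{sol-LP} respects \eqref{const:route-edge} for the representative single-hub edges $e(i_h,r)$, so that the constructed point is indeed feasible for $FZ-P$.
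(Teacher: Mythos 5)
Your proposal is correct, and its two halves relate to the paper quite differently. The first direction (LP value of $FZ-P$ at most that of $FZ-S$) is exactly the paper's proof: the paper's entire argument for the corollary consists of checking that the routing cost of the solution \eqref{sol-LP} coincides with $S_{r\overline t_r}$ for each $r\in R$, i.e., with the per-commodity term in the closed-form expression of $\overline v_{FZS}$ given in the preceding Proposition. The second direction is where you genuinely depart from the paper. The paper delegates it to Lemma \ref{lema2}/\ref{lema3}, which assert that \eqref{sol-LP} is an \emph{optimal} solution of the LP relaxation of $FZ-P$; but the greedy argument given there only shows that \eqref{sol-LP} is a cheapest routing \emph{for the fixed pair} $(\overline{\mathbf z},\overline{\mathbf y})$, not that no other choice of $(\mathbf z,\mathbf y)$ yields a smaller $FZ-P$ value. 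Your validity argument --- setting $\eta^r:=\sum_{e\in E}\overline F_{re}\overline x_{re}$ and proving $\eta^r\geq S_{rt}$ for every $t$ by splitting $\sum_e(\overline F_{re}-\overline v_{rt})\overline x_{re}$ into edge terms bounded through \eqref{const:route-edge} and single-hub terms grouped by node and bounded through the merged constraint \eqref{const:route-edge+} --- closes this gap rigorously, and it isolates exactly why the reinforcement \eqref{const:route-edge+}, absent from $CF-P$ and $CF-S$, is what makes the inequalities \eqref{const_super-z-1} valid. In this sense your proof is more self-contained than the paper's.

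Two wrinkles, both shared with (and unaddressed by) the paper, should be patched in your write-up. First, in your reverse direction the pair $(\overline{\mathbf z},\overline{\mathbf y})$ taken from $FZ-P$ need not satisfy \eqref{const:y-z1}--\eqref{const:y-z2}, which $FZ-S$ imposes, because $FZ-P$ replaced those constraints by \eqref{const:route-edge+}; the fix is to hand to $FZ-S$ the vector $y'_e:=\max_{r\in R}\overline x_{re}$, which satisfies $y'_e\leq \overline z_i$ and $y'_e\leq \overline z_j$ precisely by \eqref{const:route-edge+}, still dominates every $\overline x_{re}$, and therefore leaves your term-by-term bound intact. Second, the feasibility issue you flag in the forward direction (one needs $\overline z_{i_h}\leq \overline y_{e(i_h,r)}$ for \eqref{const:route-edge}, and $FZ-S$ guarantees no such thing) is resolved symmetrically: $\mathbf y$ is costless in $FZ-P$ and appears there only in the upper bounds \eqref{const:route-edge}, so you may enlarge it to $y'_e:=\max\{\overline y_e,\,\max_{r\in R}\overline x_{re}\}$ before invoking feasibility. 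With these two adjustments your argument is complete.
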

			
			\begin{proof}
				It can be easily checked that, for all $r\in R$, the routing cost of the solution obtained according to \eqref{sol-LP} coincides with ${S_{r\overline t_r}}$. Thus, the result follows.   \hfill $\blacksquare$
			\end{proof}

	\section{Computational experiments}\label{sec:computational}
		In this section we provide comprehensive comparisons among the $HLP_{MA}$; the $CF-S$ adapted to the MA-HLP; and the $FZ-S$ formulation in terms of LP bounds, solving times, explored nodes and scalability.
		
		All the experiments were conducted on a PC equipped with a Ryzen 7 5700G CPU and 32~GB of RAM. The formulations were implemented in Python 3.11 and solved using Gurobi 10.0.1. For reproducibility, we set Gurobi's \textit{Threads} parameter to 1 and disabled the \textit{Presolve} option.
		
		There are several well known HLP datasets widely used to compare results in the literature. The ones used in the experiments are:
		\begin{itemize}
			\item Civil Aeronautics Board (CAB) dataset, containing data from 100 airports of the USA with demands between each pair of airports \citep[see,][]{Okelly87}. It provides symmetric commodity demands $w^r$ and unit routing costs $c_{ij}$. Test instances were generated for sizes $n\in \{20,\,25,\,30,\,35,\,40,\,50,\,60,\,70,\,80,\,90,\,100\}$. For $n<100$, we considered the first $n$ entries of the dataset; for $n=100$, we used the entire dataset.
			\item The Australian Post (AP) dataset, introduced by \citet{Ernst96}, contains 200 points of data with asymmetric commodity demands $w^r$ and unit routing costs $c_{ij}$. This dataset is mainly used because of its large size, therefore, the instances were generated for sizes $n\in \{100,\,125,\,150,\,175,\,200\}$. Similarly as above, the first $n$ entries of the dataset were considered for each value of $n<200$ and the entire dataset for $n=200$.
		\end{itemize}
		
		The interhub discount factor is set to $\alpha \in \{0.2,\,0.5,\,0.8\}$. The weights for access and distribution arcs were fixed to $\gamma = 1$ and $\theta = 1$, respectively, for all the experiments. The CAB and AP hub setup costs were generated by following the methodology described in \cite{Ebery00}.
		
		The supermodular constraints were separated within a branch-and-cut algorithm, with a procedure similar to that proposed by \citet{ContrerasFernandez14}. As noted by the authors, the separation of constraints \eqref{const_super-z-1} may be done in $\mathcal{O}(|E|)$ for each commodity $r\in R$.
		
		We have set a time limit of 7200 seconds on large instances (more than 100 nodes), while smaller instances were limited to 3600 seconds.
		
		Figures \ref{fig:cab_comp} and \ref{fig:ap_comp} allow to compare the LP bounds produced by $CF-S$ adapted to the MA-HLP (blue bars), and $FZ-S$ (orange bars). The obtained results clearly show that $FZ-S$ outperforms $CF-S$, with improvements of approximately 20\% on CAB instances, and 30\% on AP instances.
		
		\begin{figure}[H]
			\centering
			\includegraphics[width=.90\linewidth]{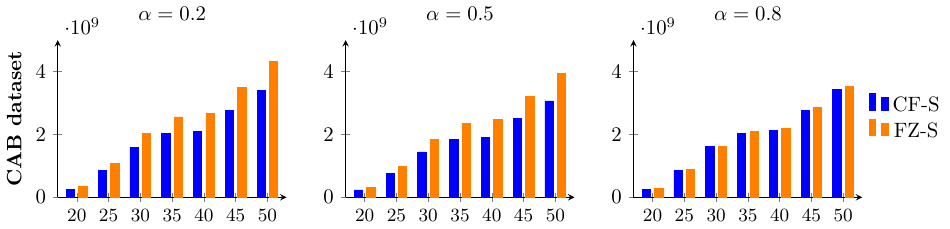}
			\caption{LP bound comparison between $CF-S$ and $FZ-S$ on CAB dataset}
			\label{fig:cab_comp}
		\end{figure}
		
		\begin{figure}[H]
			\centering
			\includegraphics[width=.90\linewidth]{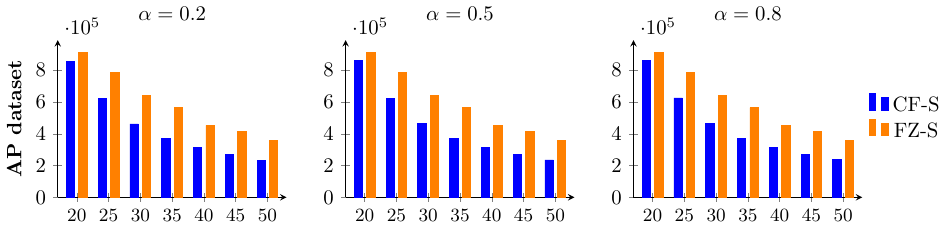}
			\caption{LP bound comparison between $CF-S$ and $FZ-S$ on AP dataset}
			\label{fig:ap_comp}
		\end{figure}
		
		Table \ref{tab:cab_comp} provides results about the overall performance of $HLP_{MA}$, $CF-S$ and $FZ-S$ on CAB instances, with up to 100 nodes. The first two columns indicate the instance size (\textit{$n$}) and interhub discount factor (\textit{$\alpha$}), respectively. The following three blocks of columns show the computing time needed to solve the instances to proven optimality (\textit{cpu (s)}) and the number of explored nodes (\textit{$\# Exp$}) for each formulation, respectively.
		
		As previously mentioned, scalability is a major issue for $HLP_{MA}$ as we were unable to solve instances larger than 50 nodes due to memory limitations. For the case of $CF-S$, in our machine we were able to solve instances of up to 70 nodes within the one hour time limit. For the optimally solved instances with $CF-S$, the number of explored nodes is, in some cases, up to 1000 times higher than with $FZ-S$, which is reflected in longer computing times.
		
		\begin{table}[H]
	\centering
	{%
		\begin{tabular}{rr|rr|rr|rr}
			\multicolumn{2}{c|}{} & \multicolumn{2}{c|}{$HLP_{MA}$} & \multicolumn{2}{c|}{$CF-S$} & \multicolumn{2}{c}{$FZ-S$} \\
			\multicolumn{1}{c}{$n$} & \multicolumn{1}{c|}{$\alpha$} & \multicolumn{1}{c}{cpu (s)} & \multicolumn{1}{c|}{$\#Exp$} & \multicolumn{1}{c}{cpu (s)} & \multicolumn{1}{c|}{$\#Exp$} & \multicolumn{1}{c}{cpu (s)} & \multicolumn{1}{c}{$\#Exp$} \\ \hline\hline 
			50 & 0.2 & 396.61 & 1 & 673.15 & 1199 & 345.83 & 1 \\
			50 & 0.5 & 351.66 & 1 & 632.10 & 786 & 182.19 & 1 \\
			50 & 0.8 & 36.22 & 1 & 500.41 & 537 & 74.67 & 1 \\
			60 & 0.2 & Mem. & - & 585.19 & 467 & 297.73 & 1 \\
			60 & 0.5 & Mem. & - & 721.89 & 1685 & 329.47 & 1 \\
			60 & 0.8 & Mem. & - & 684.48 & 349 & 120.18 & 1 \\
			70 & 0.2 & Mem. & - & 2662.35 & 931 & 1125.89 & 1 \\
			70 & 0.5 & Mem. & - & 3376.98 & 742 & 542.61 & 1 \\
			70 & 0.8 & Mem. & - & T.L. & - & 160.87 & 7 \\
			80 & 0.2 & Mem. & - & T.L. & - & 1688.71 & 1 \\
			80 & 0.5 & Mem. & - & T.L. & - & 755.88 & 1 \\
			80 & 0.8 & Mem. & - & T.L. & - & 421.28 & 1 \\
			90 & 0.2 & Mem. & - & T.L. & - & 1376.72 & 1 \\
			90 & 0.5 & Mem. & - & T.L. & - & 2442.84 & 1 \\
			90 & 0.8 & Mem. & - & T.L. & - & 1234.21 & 1 \\
			100 & 0.2 & Mem. & - & T.L. & - & 3584.07 & 1 \\
			100 & 0.5 & Mem. & - & T.L. & - & 3528.33 & 1 \\
			100 & 0.8 & Mem. & - & T.L. & - & 1143.81 & 27
		\end{tabular}%
	}
	\caption{Results of $HLP_{MA}$, $CF-S$ and $FZ-S$ on the CAB dataset}
	\label{tab:cab_comp}
\end{table}
		
		Table \ref{tab:ap_results} presents results for large AP instances, with up to 200 nodes. The first two columns are defined as in Table \ref{tab:cab_comp}. The next three columns provide the best upper bound (\textit{UB}), the best lower bound (\textit{LB}) and the lower bound at the root node (\textit{LB$_{root}$}), respectively. The following two columns report the solving time (\textit{cpu (s)}) and number of explored nodes (\textit{$\# Exp$}), respectively. The last column (\textit{Cuts}) provides the number of supermodular cuts found.
		
		Since $FZ-S$ provides the same LP relaxation than $HLP_{MA}$, most instances are solved at the root node. Instances with smaller $\alpha$ are generally harder to solve due to the burden of having many competitive interhub edges with small routing costs. As $\alpha$ increases, the number of relevant candidate edges decreases, reducing this burden. 
		
		\begin{table}[H]
	\centering
	{%
		\begin{tabular}{rr|rrrrrr}
			\multicolumn{1}{c}{$n$} & \multicolumn{1}{c|}{$\alpha$} & \multicolumn{1}{c}{$UB$} & \multicolumn{1}{c}{$LB$} & \multicolumn{1}{c}{$LB_{root}$} & \multicolumn{1}{c}{cpu (s)} & \multicolumn{1}{c}{$\#Exp$} & \multicolumn{1}{c}{Cuts} \\ \hline \hline
			100 & 0.2 & 1.36$\times 10^{5}$ & 1.36$\times 10^{5}$ & 1.36$\times 10^{5}$ & 277.68 & 1 & 19634 \\
			100 & 0.5 & 1.39$\times 10^{5}$ & 1.39$\times 10^{5}$ & 1.39$\times 10^{5}$ & 161.92 & 1 & 19862 \\
			100 & 0.8 & 1.40$\times 10^{5}$ & 1.40$\times 10^{5}$ & 1.40$\times 10^{5}$ & 67.63 & 1 & 20152 \\
			125 & 0.2 & 1.32$\times 10^{5}$ & 1.32$\times 10^{5}$ & 1.32$\times 10^{5}$ & 537.71 & 1 & 58682 \\
			125 & 0.5 & 1.37$\times 10^{5}$ & 1.37$\times 10^{5}$ & 1.37$\times 10^{5}$ & 339.52 & 1 & 64214 \\
			125 & 0.8 & 1.38$\times 10^{5}$ & 1.38$\times 10^{5}$ & 1.38$\times 10^{5}$ & 144.48 & 1 & 54840 \\
			150 & 0.2 & 1.58$\times 10^{5}$ & 1.58$\times 10^{5}$ & 1.58$\times 10^{5}$ & 1056.55 & 1 & 65700 \\
			150 & 0.5 & 1.59$\times 10^{5}$ & 1.59$\times 10^{5}$ & 1.59$\times 10^{5}$ & 791.50 & 1 & 59210 \\
			150 & 0.8 & 1.60$\times 10^{5}$ & 1.60$\times 10^{5}$ & 1.60$\times 10^{5}$ & 335.97 & 1 & 62890 \\
			175 & 0.2 & 1.43$\times 10^{5}$ & 1.43$\times 10^{5}$ & 1.43$\times 10^{5}$ & 2650.45 & 1 & 58975 \\
			175 & 0.5 & 1.50$\times 10^{5}$ & 1.50$\times 10^{5}$ & 1.50$\times 10^{5}$ & 1528.47 & 1 & 83732 \\
			175 & 0.8 & 1.51$\times 10^{5}$ & 1.51$\times 10^{5}$ & 1.51$\times 10^{5}$ & 618.12 & 1 & 86342 \\
			200 & 0.2 & 2.02$\times 10^{5}$ & 2.02$\times 10^{5}$ & 2.02$\times 10^{5}$ & 4652.08 & 1 & 98755 \\
			200 & 0.5 & 2.04$\times 10^{5}$ & 2.04$\times 10^{5}$ & 2.04$\times 10^{5}$ & 2309.08 & 1 & 112908 \\
			200 & 0.8 & 2.06$\times 10^{5}$ & 2.06$\times 10^{5}$ & 2.06$\times 10^{5}$ & 951.15 & 1 & 102666
		\end{tabular}%
	}
	\caption{Results of $FZ-S$ on large instances of the AP dataset}
	\label{tab:ap_results}
\end{table}
		
		In the experiments reported above, the obtained backbone networks consists of up to two hub nodes. This outcome can be explained by the fact that hub setup costs are significantly larger than total routing costs. 
		
		To obtain more diverse backbone networks in the larger instances, we generated instances with smaller hub setup costs. Similar to the approach by \citet{ContrerasFernandez14}, we consider a hub setup cost factor of $0.1$. Table \ref{tab:cost_factor} summarizes the results obtained with these smaller setup costs. The first two columns are defined as in Tables \ref{tab:cab_comp} and \ref{tab:ap_results}. The remaining columns are grouped in two blocks, depending on the setup cost factor. The first block corresponds to the results with the original setup costs, and the second group to the results with the new setup costs. Within each block, the columns report the objective value (\textit{val.}), number of explored nodes (\textit{$\# Exp$}), total solving time (\textit{cpu (s)}), and activated hubs (\textit{Hubs}), respectively.
		
		As expected, smaller setup costs lead to a higher number of activated hubs. Note that all instances with the same number of nodes activate the same hub nodes, independently of the discount factor $\alpha$. It is also worth noting that reducing setup costs increases the problem’s computational difficulty, as the number of competitive hub candidates increases. 
		
\begin{table}[H]
	\centering
	{%
		\begin{tabular}{rr|rrrr|rrrr}
			\multicolumn{1}{c}{} & \multicolumn{1}{c|}{} & \multicolumn{4}{c|}{Cost factor $=1$} & \multicolumn{4}{c}{Cost factor $=0.1$} \\
			\multicolumn{1}{c}{$n$} & \multicolumn{1}{c|}{$\alpha$} & \multicolumn{1}{c}{val.} & \multicolumn{1}{c}{$\# Exp$} & \multicolumn{1}{c}{$cpu\,(s)$} & \multicolumn{1}{c|}{$Hubs$} & \multicolumn{1}{c}{val.} & \multicolumn{1}{c}{$\# Exp$} & \multicolumn{1}{c}{$cpu\,(s)$} & \multicolumn{1}{c}{$Hubs$} \\ \hline \hline
			100 & 0.2 & 1.356$\times 10^5$ & 1 & 277.68 & [4, 51] & 9.802$\times 10^4$ & 1 & 343.39 & [4, 40, 51] \\
			100 & 0.5 & 1.385$\times 10^5$ & 1 & 161.92 & [4, 51] & 1.024$\times 10^5$ & 1 & 178.73 & [4, 40, 51] \\
			100 & 0.8 & 1.397$\times 10^5$ & 1 & 67.63 & [4, 51] & 1.037$\times 10^5$ & 1 & 70.65 & [4, 40, 51] \\
			125 & 0.2 & 1.323$\times 10^5$ & 1 & 537.71 & [11, 80] & 8.369$\times 10^4$ & 1 & 961.91 & [11, 46, 80] \\
			125 & 0.5 & 1.369$\times 10^5$ & 1 & 339.52 & [11, 80] & 9.059$\times 10^4$ & 1 & 515.96 & [11, 46, 80] \\
			125 & 0.8 & 1.379$\times 10^5$ & 1 & 144.48 & [11, 80] & 9.264$\times 10^4$ & 1 & 191.05 & [11, 46, 80] \\
			150 & 0.2 & 1.575$\times 10^5$ & 1 & 1056.55 & [24, 39] & 9.652$\times 10^4$ & 1 & 1974.45 & [10, 39, 52] \\
			150 & 0.5 & 1.588$\times 10^5$ & 1 & 791.50 & [24, 39] & 1.197$\times 10^5$ & 1 & 978.32 & [10, 39, 52] \\
			150 & 0.8 & 1.597$\times 10^5$ & 1 & 335.97 & [24, 39] & 1.202$\times 10^5$ & 1 & 365.99 & [10, 39, 52] \\
			175 & 0.2 & 1.432$\times 10^5$ & 1 & 2650.45 & [21, 168] & 8.847$\times 10^4$ & 1 & 2580.91 & [21, 43, 168] \\
			175 & 0.5 & 1.498$\times 10^5$ & 1 & 1528.47 & [21, 168] & 9.511$\times 10^4$ & 1 & 1545.50 & [21, 43, 168] \\
			175 & 0.8 & 1.514$\times 10^5$ & 1 & 618.12 & [21, 168] & 9.669$\times 10^4$ & 1 & 632.81 & [21, 43, 168] \\
			200 & 0.2 & 2.023$\times 10^5$ & 1 & 4652.08 & [31, 46] & 8.791$\times 10^4$ & 1 & 7144.44 & [21, 46, 52, 185] \\
			200 & 0.5 & 2.037$\times 10^5$ & 1 & 2309.08 & [31, 46] & 9.649$\times 10^4$ & 1 & 3613.36 & [21, 46, 52, 185] \\
			200 & 0.8 & 2.057$\times 10^5$ & 1 & 951.15 & [31, 46] & 9.941$\times 10^4$ & 1 & 1186.16 & [21, 46, 52, 185]
		\end{tabular}%
	}
	\caption{Results of $FZ-S$ considering the setup cost factor on large instances of the AP dataset}
	\label{tab:cost_factor}
\end{table}

	\section{Conclusions}\label{sec:conclusions}
		We have presented a new 2-index formulation for the MA-HLP that yields the same LP bounds of the tightest 4-index formulations by \citet{Hamacher04} and \citet{Marin-et-al06}.
		
		Computational experiments on instances with up to 200 nodes showed that $FZ-S$ scales more effectively than $HLP_{MA}$, and outperforms the $CF-S$ formulation adapted to the MA-HLP, solving the large instances in under two hours.
		
		The results show that our proposal effectively combines the strengths of both formulations: on the one hand, it achieves a very tight LP relaxation; on the other, it relies on a substantially smaller number of variables, improving scalability and implementation efficiency.
		
		Future work will focus on extending the formulation to handle longer paths involving more than one interhub connection. This extension would allow modeling network structures where the backbone is not necessarily complete, thereby expanding our research to more general hub network design settings.
	
	\section{Acknowledgments}
		This work was supported by the Spanish Ministry of Science and Innovation under projects \textbf{PID2023-146643NB-I00} and \textbf{PID2019-105824GB-I00}.

\bibliographystyle{abbrvnat}
\bibliography{Thesis_bib_clean2}
\end{document}